\DeclarePairedDelimiter\ceil{\lceil}{\rceil}
\DeclarePairedDelimiter\floor{\lfloor}{\rfloor}
\definecolor{donkergroen}{RGB}{46,148,0}
\definecolor{donkerrood}{RGB}{204,0,0}
\definecolor{blauw}{RGB}{61,158,255}
\definecolor{donkerblauw}{RGB}{0,0,255}
\definecolor{donkergroen}{RGB}{46,148,0}
\definecolor{donkerrood}{RGB}{204,0,0}
\newenvironment{speciaalenumerate}{
\begin{enumerate}[(i)]
  \setlength{\itemsep}{1pt}
  \setlength{\parskip}{0pt}
  \setlength{\parsep}{0pt}
}{\end{enumerate}}
\newif\if@borderstar
\def\bordermatrix{\@ifnextchar*{%
\@borderstartrue\@bordermatrix@i}{\@borderstarfalse\@bordermatrix@i*}%
}
\def\@bordermatrix@i*{\@ifnextchar[{\@bordermatrix@ii}{\@bordermatrix@ii[()]}}
\def\@bordermatrix@ii[#1]#2{%
\begingroup
\m@th\@tempdima8.75\p@\setbox\z@\vbox{%
\def\cr{\crcr\noalign{\kern 2\p@\global\let\cr\endline }}%
\ialign {$##$\hfil\kern 2\p@\kern\@tempdima & \thinspace %
\hfil $##$\hfil && \quad\hfil $##$\hfil\crcr\omit\strut %
\hfil\crcr\noalign{\kern -\baselineskip}#2\crcr\omit %
\strut\cr}}%
\setbox\tw@\vbox{\unvcopy\z@\global\setbox\@ne\lastbox}%
\setbox\tw@\hbox{\unhbox\@ne\unskip\global\setbox\@ne\lastbox}%
\setbox\tw@\hbox{%
$\kern\wd\@ne\kern -\@tempdima\left\@firstoftwo#1%
\if@borderstar\kern2pt\else\kern -\wd\@ne\fi%
\global\setbox\@ne\vbox{\box\@ne\if@borderstar\else\kern 2\p@\fi}%
\vcenter{\if@borderstar\else\kern -\ht\@ne\fi%
\unvbox\z@\kern-\if@borderstar2\fi\baselineskip}%
\if@borderstar\kern-2\@tempdima\kern2\p@\else\,\fi\right\@secondoftwo#1 $%
}\null \;\vbox{\kern\ht\@ne\box\tw@}%
\endgroup
}
\newcommand\mynobreakpar{\par\nobreak\@afterheading} 
\newcommand{\N}{\mathbb{N}}
\newcommand{\Z}{\mathbb{Z}}
\newcommand{\R}{\mathbb{R}}
\newtheorem{theorem}{Theorem}[section]
\newtheorem{lemma}[theorem]{Lemma}
\newtheorem{proposition}[theorem]{Proposition}
\newtheorem{corollary}[theorem]{Corollary}
\theoremstyle{definition}
\newtheorem*{examp*}{Example}
\def\lc{\left\lceil}   
\def\rc{\right\rceil}
\def\lf{\left\lfloor}   
\def\rf{\right\rfloor}
\theoremstyle{plain}
\let\@fnsymbol\@arabic
\newcounter{thm}[section]
\title{On the circular chromatic number of a subgraph of the Kneser graph}
\author{Bart Litjens\thanks{Korteweg-De Vries Institute for Mathematics, University of Amsterdam.
 Email: \texttt{bart\_litjens@hotmail.com}, \texttt{sven\_polak@hotmail.com}, \texttt{blsevenster@gmail.com}, \texttt{lluis.vena@gmail.com}. The research leading to these
results has received funding from the European Research Council under the European Union’s Seventh Framework Programme (FP7/2007-2013) / ERC grant agreement \textnumero 339109.} , Sven Polak\footnotemark[1] , Bart Sevenster\footnotemark[1] , Llu\' is Vena\footnotemark[1]}
\begin{document}

	\maketitle

\noindent \textbf{Abstract.} 
Let $n,k,r$ be positive integers with $n \geq rk$ and $r \geq 2$. Consider a circle $C$ with~$n$ points~$1,\linebreak[0]\ldots,\linebreak[0]n$ in clockwise order. The $r$-stable \emph{interlacing graph} $\text{IG}_{n,k}^{(r)}$ is the graph with vertices corresponding to $k$-subsets $S$ of $\{1,...,n\}$ such that any two distinct points in~$S$ have distance at least~$r$ around the circle, and edges between~$k$-subsets $P$ and $Q$ if they \emph{interlace}: after removing the points in~$P$ from $C$, the points in~$Q$ are in different connected components. In this paper we prove that the circular chromatic number of $\text{IG}_{n,k}^{(r)}$ is equal to $ n/k $ (hence the chromatic number is $\lceil n/k \rceil$) and that its circular clique number is also $ n/k $. Furthermore, we show that its independence number is $\binom{n-(r-1)k-1}{k-1}$, thereby strengthening a result by Talbot.\vspace{1mm}
\,$\phantom{0}$

\noindent {\bf Keywords:} Kneser graph, chromatic number, independence number, polygon, interlace

\noindent {\bf MSC 2010:} 05C15,  05C69, 52B11

\section{Introduction}

Let $n$ and $k$ be positive integers with $n \geq 2k$. The Kneser graph $\text{KG}_{n,k}$, first introduced by Martin Kneser in \cite{Kneser}, is the graph with vertices corresponding to $k$-subsets of $[n]:=\{1,\ldots,n\}$, where two vertices are adjacent if the corresponding sets are disjoint. Kneser conjectured that the chromatic number of $\text{KG}_{n,k}$ is $n-2k+2$. In \cite{Lovasz}, Lov\'asz proved this conjecture using topological methods. The Schrijver graph $\text{SG}_{n,k}$, also known as the $2$-stable Kneser graph, is the subgraph of $\text{KG}_{n,k}$ induced by those vertices that correspond to $k$-subsets of $[n]$ not containing adjacent elements in $[n]$ (here, $1$ and $n$ are adjacent). In~\cite{Schrijv}, Schrijver showed that $\text{SG}_{n,k}$ is a vertex-critical subgraph of $\text{KG}_{n,k}$ and also has chromatic number $n-2k+2$. (Vertex-critical means that the deletion of any vertex reduces the chromatic number.) Another famous result regarding the Kneser graph is the Erd\H{o}s-Ko-Rado theorem \cite{EKR}, which says that the maximum size of an independent set of $\text{KG}_{n,k}$ is $\binom{n-1}{k-1}$.

In this paper, all graphs are assumed to be finite. Let $G = (V,E)$ be a graph.
A \emph{circular coloring} of size $n/k$ is an assignment $\chi:V\to \mathbb{Z}/n\mathbb{Z}$, where $\Z/n\Z$ is the cyclic group of order $n$, such that $\chi(v_1)-\chi(v_2)\in\{\overline{k},\overline{k+1},\ldots,\overline{-k}\}$,
if $v_1v_2 \in E$. The \emph{circular chromatic number} $\chi_{c}(G)$ of $G$ is the minimal rational number $n/k$ for which there exists a circular coloring of size $n/k$. 
The \emph{circular clique} $K_{n/k}$ of size $n/k$ is the graph with vertex set $\Z/n\Z$, in which two vertices are adjacent if their distance is larger than or equal to $k$.  Hence, the circular chromatic number is  the minimal rational number $n/k$ for which there is a graph homomorphism of~$G$ to~$K_{n/k}$\footnote{Recall that a graph homomorphism from $G$ to $F$ is a map from the vertex set of $G$ to the vertex set of $F$ that maps edges of $G$ to edges of $F$.}. It is known that $\chi_{c}(K_{n/k})=n/k$ \cite{vince}.
 The \emph{circular clique number} $\omega_{c}(G)$ of $G$ is the maximal rational number~$n/k$ for which there is a homomorphism of~$K_{n/k}$ to~$G$. The numbers~$\chi_c(G)$ and~$\omega_{c}(G)$ are well-defined, as there is a homomorphism~$K_{n/k} \to K_{n'/k'}$ if and only if~$n/k\leq n'/k'$ \cite{starnote,vince} and the minimum and maximum in the definitions of the circular chromatic number and circular clique number respectively are indeed achieved~$\cite{vince,zhugt}$. Furthermore, it holds $\chi(G) = \lceil \chi_{c}(G)\rceil$ and~$\omega(G)=\floor{\omega_c(G)}$ (see, for example,~\cite{zhu}).
In \cite{chen}, Chen confirmed the conjecture from \cite{johholstah} that $\chi_{c}(\text{KG}_{n,k})=\chi(\text{KG}_{n,k})$, which was previously known for some cases, such as for even $n$ for the Schrijver graph \cite{meunier,simtar}.

In this paper we consider a subgraph of the Kneser graph. By abuse of notation, we sometimes write $P = \{1 \leq p_1 < \ldots < p_k \leq n\}$, if $P$ is a $k$-subset of $[n]$ consisting of the elements $p_1,\ldots,p_k$, with $p_1 < \ldots < p_k$. If $P$ and $Q$ are two $k$-subsets of $[n]$, with $P=\{1\leq p_1<\ldots<p_k\leq n\}$ and 
$Q=\{1\leq q_1<\ldots<q_k\leq n\}$, then 
$P$ and $Q$ are \emph{interlacing} if
either 
\[1\leq p_1<q_1<p_2<q_2<\cdots < p_k<q_k\leq n\]
or 
\[1\leq q_1<p_1<q_2<p_2<\cdots < q_k<p_k\leq n.\]
By distributing the elements of $[n]$ in clockwise order around a circle, we may view $P$ and $Q$ as $k$-polygons with points on the circle. Then $P$ and $Q$ are interlacing if 
removing the points of $P$ divides the circle into intervals that each contain one point of $Q$. We use this analogy to refer to $k$-subsets in $[n]$ as $k$-polygons, or just polygons when $k$ is understood.

We say that a polygon $P$ is $r$-\emph{stable} if for every pair of distinct points $a,b$ of $P$ there are at least $r-1$ elements of $[n]$ between $a$ and $b$ on the circle. The \emph{$r$-interlacing graph} $\text{IG}_{n,k}^{(r)}$ is the graph whose vertices correspond to $r$-stable $k$-polygons on $[n]$, and where two vertices are adjacent if the corresponding polygons are interlacing. Observe that polygons with two adjacent points would give rise to isolated vertices in the interlacing graph,  hence only $r$-stable polygons with $r\geq 2$ are considered. Note that $\text{IG}_{n,k}^{(r)}$ also is a subgraph of the Schrijver graph $\text{SG}_{n,k}$ and that $\text{IG}_{n,k}^{(2)}$ has the same vertex set as $\text{SG}_{n,k}$.

\subsection{Main results}

%
%
%
%
%

Fix positive integers $n,k,r$. If $kr> n$ then $\mathrm{IG}_{n,k}^{(r)}$ has no vertices. If $r=1$ then $\mathrm{IG}_{n,k}^{(r)}$ has isolated vertices. Hence, we will assume that $kr \leq n$ and $r \geq 2$ throughout this paper.
Our main result is the following.

\begin{theorem}\label{thm:circ_chrom}
	The circular chromatic number of $\mathrm{IG}_{n,k}^{(r)}$ is equal to $n/k$.
\end{theorem}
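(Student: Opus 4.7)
The plan is to prove $\chi_c(G) \leq n/k$ and $\chi_c(G) \geq n/k$ for $G = \mathrm{IG}_{n,k}^{(r)}$; these two bounds together give the theorem. For the upper bound I would produce a graph homomorphism $G \to K_{n/k}$ via the circular coloring $\chi(P) = \sum_{p\in P} p \pmod n$. To check it works, take two interlacing polygons $P = \{p_1 < \cdots < p_k\}$ and $Q = \{q_1 < \cdots < q_k\}$ with $p_1 < q_1 < p_2 < \cdots < p_k < q_k$; then $\chi(Q) - \chi(P) = \sum_{i=1}^k (q_i - p_i)$. Each summand is at least $1$, forcing the total to be at least $k$, while the interlacing constraints $q_i \leq p_{i+1}-1$ for $i<k$ and $q_k \leq n$ telescope to $\sum_i(q_i-p_i) \leq q_k - p_1 - (k-1) \leq n-k$. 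Hence the difference lies in $\{k,k+1,\ldots,n-k\} \pmod n$, and $\chi$ is a valid circular coloring of size $n/k$.

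For the lower bound I would argue via the standard chain $\chi_c(G) \geq \chi_f(G) \geq |V(G)|/\alpha(G)$, where $\chi_f$ is the fractional chromatic number. A Kaplansky-type count gives $|V(G)| = \tfrac{n}{k}\binom{n-(r-1)k-1}{k-1}$ for the number of $r$-stable $k$-subsets of $\Z/n\Z$; combined with the independence number $\alpha(G) = \binom{n-(r-1)k-1}{k-1}$ (which the paper establishes as its strengthening of Talbot's result), this yields $|V(G)|/\alpha(G) = n/k$, so $\chi_c(G) \geq n/k$. Alternatively one could construct a graph homomorphism $K_{n/k} \to G$ directly, obtaining $\omega_c(G) \geq n/k$ and therefore $\chi_c(G) \geq \omega_c(G) \geq n/k$ via the standard inequality $\omega_c \leq \chi_c$.

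The main difficulty lies in the lower bound. In the $|V|/\alpha$ route the hard step is showing that the Erd\H{o}s-Ko-Rado-type family of $r$-stable $k$-polygons containing a fixed point is indeed a largest independent set --- a delicate extremal argument (this is exactly the strengthening of Talbot). In the clique-embedding route, one must produce $n$ $r$-stable $k$-polygons $P_0,\ldots,P_{n-1}$ with $P_i$ and $P_j$ interlacing whenever their indices are at circular distance at least $k$; small examples (already $n=10$, $k=3$, $r=2$) show that uniform cyclic shifts $P_i = P_0+i$ of a single base polygon cannot suffice, so any such construction will have to be noticeably more intricate.
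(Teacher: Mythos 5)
Your proof is correct, and your upper-bound argument takes a genuinely different, substantially simpler route than the paper's. The paper constructs a family $\mathcal{L}_{n,k}$ of canonical polygons (via Lemma~\ref{lem:perm_alt} and Corollary~\ref{cor_alt}), proves that suitable pairs of its rotation classes are independent (Lemma~\ref{lem:independent_rotations}), and only then colors by $P\mapsto\overline{ik}$ where $i$ is the least index with $P\in\rho_i(\mathcal{L}_{n,k})$. Your coloring $\chi(P)=\sum_{p\in P}p\pmod n$ bypasses all of this: for interlacing $P,Q$ with $p_1<q_1<\cdots<p_k<q_k$, the telescoping bounds $k\le\sum_i(q_i-p_i)\le q_k-p_1-(k-1)\le n-k$ place $\chi(Q)-\chi(P)$ in $\{\bar k,\ldots,\overline{-k}\}$ at once, the reversed interlacing order follows by symmetry, and $r$-stability is never used --- which is fine, since $\mathrm{IG}^{(r)}_{n,k}$ is a subgraph of $\mathrm{IG}^{(2)}_{n,k}$ (the same reduction the paper makes). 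Your lower bound via $\chi_c\ge\chi_f\ge|V|/\alpha$ coincides exactly with the paper's Lemma~\ref{cor:circ_low}, using Lemma~\ref{lem:point} for $|V|$ and Theorem~\ref{prop:size} for $\alpha$. What the paper's heavier upper-bound machinery buys is reuse: Corollary~\ref{cor_alt} and Lemma~\ref{lem:independent_rotations} are invoked again in the proof of Theorem~\ref{thm:cliquenumber}, where the circular clique is built from uniform rotations $P^j=\rho_j(P^0)$ of the base polygon $P^0=\{\lceil n/k\rceil,\lceil 2n/k\rceil,\ldots,\lceil(k-1)n/k\rceil,n\}$. Incidentally, this makes your closing remark overcautious: uniform cyclic shifts of a single polygon \emph{do} give the clique; the naive labeling $P^j\leftrightarrow\bar j$ fails, but the paper's homomorphism sends $P^j\mapsto\overline{jk}$, and with that labeling it works.
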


Thus, we immediately obtain the following corollary (cf.~\cite{zhu}). 

\begin{corollary}\label{thm:chrom}
	The chromatic number of $\mathrm{IG}_{n,k}^{(r)}$ is equal to $\lceil n/k \rceil$.
\end{corollary}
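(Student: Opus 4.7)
The plan is to deduce this corollary immediately from Theorem~\ref{thm:circ_chrom}. Indeed, the introduction already recalls the standard identity $\chi(G) = \lceil \chi_{c}(G) \rceil$ valid for any finite graph $G$ (see e.g.~\cite{zhu}). Applying this with $G = \mathrm{IG}_{n,k}^{(r)}$ and substituting the value $\chi_c(\mathrm{IG}_{n,k}^{(r)}) = n/k$ supplied by Theorem~\ref{thm:circ_chrom} gives $\chi(\mathrm{IG}_{n,k}^{(r)}) = \lceil n/k \rceil$ in one line.

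If one preferred to avoid invoking the general identity as a black box, the same argument can be unpacked using only the definitions from the introduction. For the upper bound, a circular coloring of size $n/k$ is the same data as a graph homomorphism $\varphi \colon \mathrm{IG}_{n,k}^{(r)} \to K_{n/k}$, and any proper $\lceil n/k \rceil$-coloring $c$ of $K_{n/k}$ (which exists since $\chi(K_{n/k}) = \lceil n/k \rceil$) yields a proper $\lceil n/k \rceil$-coloring $c \circ \varphi$ of $\mathrm{IG}_{n,k}^{(r)}$. For the lower bound, one uses the fact that $\mathrm{IG}_{n,k}^{(r)}$ contains $K_{\lceil n/k \rceil}$ as a subgraph (equivalently, $\omega(\mathrm{IG}_{n,k}^{(r)}) \geq \lceil n/k \rceil$), which will be established as part of the proof of Theorem~\ref{thm:circ_chrom} when it is shown that the circular clique number is also $n/k$.

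There is no real obstacle to overcome here: the entire mathematical content has been pushed into Theorem~\ref{thm:circ_chrom}, and the corollary is a formal consequence. The work that still needs to be done lies upstream, in proving the circular chromatic and circular clique numbers of $\mathrm{IG}_{n,k}^{(r)}$, not in the ceiling step.
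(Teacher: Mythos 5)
Your primary argument --- applying the standard identity $\chi(G) = \lceil \chi_c(G)\rceil$ to the value $\chi_c(\mathrm{IG}_{n,k}^{(r)}) = n/k$ from Theorem~\ref{thm:circ_chrom} --- is exactly the paper's one-line deduction and is correct.

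The optional ``unpacked'' version you offer, though, has a genuine flaw in its lower bound. You claim $\mathrm{IG}_{n,k}^{(r)}$ contains $K_{\lceil n/k\rceil}$ as a subgraph, i.e.\ $\omega(\mathrm{IG}_{n,k}^{(r)}) \geq \lceil n/k\rceil$. But the paper's own recalled identity $\omega(G) = \lfloor \omega_c(G)\rfloor$, combined with $\omega_c(\mathrm{IG}_{n,k}^{(r)}) = n/k$ (which is Theorem~\ref{thm:cliquenumber}, a separate result, not a byproduct of Theorem~\ref{thm:circ_chrom} as you suggest), gives $\omega(\mathrm{IG}_{n,k}^{(r)}) = \lfloor n/k\rfloor$. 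Whenever $k \nmid n$ this is strictly less than $\lceil n/k\rceil$, so the clique you invoke does not exist and the clique bound only gives $\chi \geq \lfloor n/k\rfloor$, one short of what you need. The paper's actual lower bound on $\chi_c$ comes instead from the fractional chromatic number, $\chi_c(G) \geq \chi_f(G) \geq |V(G)|/\alpha(G) = n/k$ (Lemma~\ref{cor:circ_low}), and taking ceilings then yields $\chi \geq \lceil n/k\rceil$. Since your first, one-line argument already suffices, the corollary is still proved, but the alternative route as you sketched it does not go through.
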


We also determine the independence number of the interlacing graph (Theorem~\ref{prop:size}), as well as the circular clique number (Theorem~\ref{thm:cliquenumber}).

\begin{theorem}\label{prop:size}
	The independence number of $\mathrm{IG}_{n,k}^{(r)}$ is $\binom{n-(r-1)k-1}{k-1}$. 
\end{theorem}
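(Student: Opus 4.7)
I plan to prove the two inequalities separately, exploiting that $\mathrm{IG}_{n,k}^{(r)}$ is vertex-transitive under the cyclic rotation $j\mapsto j+1\pmod{n}$ of $[n]$, which clearly preserves both $r$-stability and the interlacing relation. For the lower bound, I would take the star family $\mathcal{F}_1$ of all $r$-stable $k$-polygons containing the fixed point $1$: any two members share $1$ and so are not disjoint, while interlacing pairs are necessarily disjoint (the two cyclic orders strictly alternate); hence $\mathcal{F}_1$ is independent. The reparameterization $(1,p_2,\ldots,p_k)\mapsto(1,p_2-(r-1),\ldots,p_k-(k-1)(r-1))$ linearises the gap constraints into a strict-increase condition on $k-1$ values in $\{2,\ldots,n-(r-1)k\}$, giving $|\mathcal{F}_1|=\binom{n-(r-1)k-1}{k-1}$.

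For the matching upper bound I would invoke the identity $\alpha(G)\,\chi_f(G)=|V(G)|$, which holds for any vertex-transitive graph $G$, with $\chi_f$ the fractional chromatic number. A standard composition count (compositions of $n$ into $k$ parts each $\geq r$, divided by $k$ to account for the choice of starting element) yields $|V(\mathrm{IG}_{n,k}^{(r)})|=\tfrac{n}{k}\binom{n-(r-1)k-1}{k-1}$, so the upper bound reduces to showing $\chi_f(\mathrm{IG}_{n,k}^{(r)})=n/k$. The sandwich $\omega_c(G)\le\chi_f(G)\le\chi_c(G)$ holds for every graph (pull fractional colourings back along a graph homomorphism and use $\chi_f(K_{n'/k'})=n'/k'$); combined with Theorem~\ref{thm:circ_chrom} and Theorem~\ref{thm:cliquenumber} this yields $n/k=\omega_c\le\chi_f\le\chi_c=n/k$, so $\chi_f=n/k$ and consequently $\alpha(\mathrm{IG}_{n,k}^{(r)})=|V|/\chi_f=\binom{n-(r-1)k-1}{k-1}$.

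The main obstacle is the inequality $\chi_f\geq n/k$, which via the sandwich reduces to $\omega_c\geq n/k$, the non-trivial half of Theorem~\ref{thm:cliquenumber}. Concretely, one has to exhibit $n$ $r$-stable $k$-polygons, most naturally the $n$ cyclic rotations of a carefully chosen template, such that any two whose cyclic distance is at least $k$ interlace, yielding a graph homomorphism $K_{n/k}\to\mathrm{IG}_{n,k}^{(r)}$. All remaining ingredients (vertex-transitivity, the Erd\H{o}s-Ko-Rado-type star construction, the vertex count, and the circular chromatic bound from Theorem~\ref{thm:circ_chrom}) are routine by comparison.
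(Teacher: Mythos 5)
Your lower bound (the star family at the point $1$, counted by the substitution $p_i\mapsto p_i-(i-1)(r-1)$) is correct and matches Lemma~\ref{lem:point} of the paper. However, your upper bound hinges on the claim that $\mathrm{IG}_{n,k}^{(r)}$ is vertex-transitive under cyclic rotation, and this is false. The rotation $j\mapsto j+1\pmod n$ preserves the ``shape'' $s(P)$ (the cyclic sequence of gaps between consecutive points of $P$), so its orbits are indexed by shapes, and distinct shapes give distinct orbits. Worse, the graph is in general not even regular, so no automorphism group can be transitive: in $\mathrm{IG}_{6,2}^{(2)}$ the vertex $\{1,4\}$ (shape $(3,3)$) has the four interlacing neighbours $\{2,5\},\{2,6\},\{3,5\},\{3,6\}$, whereas the vertex $\{1,3\}$ (shape $(2,4)$) has only the three interlacing neighbours $\{2,4\},\{2,5\},\{2,6\}$. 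Since the identity $\alpha(G)\,\chi_f(G)=|V(G)|$ you invoke is precisely the vertex-transitive strengthening of the universally valid inequality $\chi_f(G)\geq |V(G)|/\alpha(G)$, and without transitivity the inequality points the wrong way ($\alpha\geq |V|/\chi_f$ rather than $\alpha\leq |V|/\chi_f$), your argument collapses at this step.

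The paper avoids this entirely: the upper bound on $\alpha$ is proved directly by a Talbot-style double induction on $n$ and $k$, partitioning a putative independent family according to how the ``collapse'' map $f:[n]\to[n-1]$ (which fixes $1$ and shifts everything else down by one) acts, and applying the inductive hypothesis to the two pieces $f(\mathcal{I}_{\text{rem}})\subseteq\binom{[n-1]}{k}^{(r)}$ and $\{P\setminus\{1\}: P\in f^{r-1}(\cdots)\}\subseteq\binom{[n-r]}{k-1}^{(r)}$. This gives the upper bound without any appeal to $\chi_c$ or $\omega_c$, which is essential because in the paper's logical architecture the lower bound $\chi_c\geq n/k$ (Lemma~\ref{cor:circ_low}) is itself \emph{derived from} Theorem~\ref{prop:size}; your plan should therefore also be careful to use only the homomorphism-based upper bound $\chi_c\leq n/k$ and the explicit clique giving $\omega_c\geq n/k$, not the full equalities of Theorems~\ref{thm:circ_chrom} and~\ref{thm:cliquenumber}, to avoid a circular dependence -- but that is a secondary concern compared to the failure of vertex-transitivity.
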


\begin{theorem}\label{thm:cliquenumber}
The circular clique number of $\mathrm{IG}_{n,k}^{(r)}$ is $n/k$.
\end{theorem}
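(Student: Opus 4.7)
The upper bound $\omega_c(\mathrm{IG}_{n,k}^{(r)}) \le n/k$ is immediate from Theorem~\ref{thm:circ_chrom} together with the standard inequality $\omega_c(G) \le \chi_c(G)$ valid for every graph $G$ (see~\cite{zhu}).

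For the matching lower bound I would construct an explicit graph homomorphism $K_{n/k} \to \mathrm{IG}_{n,k}^{(r)}$. Set $d := \gcd(n,k)$, $n' := n/d$, $k' := k/d$; since $K_{n/k}$ and $K_{n'/k'}$ are homomorphically equivalent (both having circular chromatic number $n/k$), it suffices to produce a hom from $K_{n'/k'}$. Consider the ``balanced'' $r$-stable $k$-polygon
\[
A := \{\lfloor jn/k \rfloor : 0 \le j < k\} \subset \mathbb{Z}/n\mathbb{Z},
\]
whose gaps lie in $\{\lfloor n/k \rfloor, \lceil n/k \rceil\}$ and are all at least $r$ since $n \ge kr$. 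Pick an integer $c$ with $ck' \equiv 1 \pmod{n'}$, and define $\phi(i) := A + ci \pmod n$ for $i \in \mathbb{Z}/n'\mathbb{Z}$. A short check shows that the cyclic-shift stabilizer of $A$ in $\mathbb{Z}/n\mathbb{Z}$ is exactly $\langle n' \rangle$, so $\phi$ is well-defined on $\mathbb{Z}/n'\mathbb{Z}$. Granted the key lemma stated below, the homomorphism property is easy: if $i, j \in \mathbb{Z}/n'\mathbb{Z}$ are adjacent in $K_{n'/k'}$ (cyclic distance at least $k'$), the shift $s := c(j-i) \bmod n$ satisfies $sk \equiv d(j-i) \pmod n$ (using $ck' \equiv 1 \pmod{n'}$, which gives $ck \equiv d \pmod n$), and this value lies in $\{k, k+d, \ldots, n-k\} \subseteq \{k, \ldots, n-k\}$, forcing $\phi(i)$ and $\phi(j)$ to interlace.

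The main obstacle is the key lemma: \emph{for every $s \in \mathbb{Z}/n\mathbb{Z}$, the polygons $A$ and $A+s$ are interlacing if and only if $sk \bmod n \in \{k, k+1, \ldots, n-k\}$.} I would prove it in two steps. First, using the identity $k \lfloor jn/k \rfloor = jn - (jn \bmod k)$, one computes the difference set $A - A$ explicitly and deduces that $A + s$ is disjoint from $A$ precisely when $sk \bmod n \in [k, n-k]$. Second, assuming this disjointness, a direct manipulation of floor functions shows that the element $a_j + s$ lies in the gap $I_\ell$ of $A$ with $\ell \equiv j + \lfloor sk/n \rfloor \pmod k$; since $j \mapsto j + \lfloor sk/n \rfloor \pmod k$ is a bijection (a cyclic shift of indices), each of the $k$ gaps of $A$ receives exactly one element of $A + s$, which is precisely the interlacing condition. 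Both steps amount to careful bookkeeping with the Beatty-like sequence defining $A$; the balanced choice of $A$ is exactly what makes the floor functions cooperate.
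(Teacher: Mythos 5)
Your proposal is correct and shares the paper's high-level strategy: the upper bound is the standard inequality $\omega_c(G)\leq\chi_c(G)$ applied to Theorem~\ref{thm:circ_chrom}, and the lower bound is an explicit circular clique built from cyclic shifts of a single balanced $k$-polygon. The paper uses the polygon $P^0=\{n,\lceil n/k\rceil,\ldots,\lceil (k-1)n/k\rceil\}$ built with ceilings and the map $P^j\mapsto \overline{jk}$; you use the mirror-symmetric $A=\{\lfloor jn/k\rfloor\}$ and the inverse parametrization $i\mapsto A+ci$ with $ck'\equiv 1\pmod{n'}$. These are the same construction up to reflection and inverting the bijection, and the reduction to $\gcd(n',k')=1$ via the stabilizer $\langle n'\rangle$ is likewise identical.

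Where you genuinely diverge is in how interlacing is certified. Proposition~\ref{lem:lower_bound} in the paper establishes $G_{\mathrm{equi}}\cong K_{n'/k'}$ in two halves: non-adjacency for nearby shifts is delegated back to Lemma~\ref{lem:independent_rotations}, while adjacency for distant shifts is proved by ceiling manipulations carried out only at the extreme values $s=1$ and $s=\lfloor(i+1)n/k\rfloor-\lceil in/k\rceil-1$, with a monotonicity remark covering intermediate $s$. Your key lemma --- $A$ and $A+s$ interlace iff $sk\bmod n\in\{k,\ldots,n-k\}$ --- replaces this with a single uniform statement, and the index-shift formula $a_j+s\in I_{j+\lfloor sk/n\rfloor}$ gives a one-line verification: writing $sk=qn+t$ and $jn=ka_j+r_j$ with $0\le r_j<k$, the required inclusions become $t>r_j-r_{j+q}$ and $t<n-r_{j+q+1}+r_j$, both strict whenever $k\le t\le n-k$, so each of the $k$ gaps of $A$ receives exactly one point of $A+s$. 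This is a tidier route than the paper's casework, and it does not need to re-invoke Lemma~\ref{lem:independent_rotations}. One simplification you could make: only the ``if'' direction of your key lemma is used in building the homomorphism $K_{n'/k'}\to\mathrm{IG}_{n,k}^{(r)}$, so the slightly more delicate converse (reading off $A-A$ exactly, which when $\gcd(n,k)>1$ needs the observation that $A-A$ is a union of cosets of $\langle n'\rangle$) can simply be dropped.
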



The $2$-interlacing graph has connections with triangulations of cyclic polytopes. For $k=2$, note that two non-interlacing polygons on $[n]$ are just two non-crossing lines between vertices of an $n$-polygon. Then a maximal set of pairwise non-interlacing polygons is a triangulation. In \cite{Opper}, Oppermann and Thomas generalized this observation to higher dimensions: triangulations of the cyclic polytope with $n$ vertices in dimension $2k-2$, are in bijection with the independent sets of polygons in $\text{IG}_{n,k}^{(2)}$ of maximal size.
(The cyclic polytope $C(n,2k-2)$ with n vertices and dimension $2k-2$ is the convex hull of $n$ distinct points in $\R^{2k-2}$ that are obtained as evaluations of the curve defined by $P(x) = (x,x^2,\ldots,x^{2k-2})$, which is called the moment curve.)
%
In particular, the chromatic number of $\text{IG}_{n,k}^{(2)}$ gives the minimal size of a partition of the $(k-1)$-dimensional internal simplices of $C(n,2k-2)$ (see \cite{Opper}) in which no two simplices in each part internally intersect.

The proof of Theorem~\ref{prop:size} 
uses similar arguments as those in \cite{talbot2003intersecting}.
%
%
Theorem~3 in~\cite{talbot2003intersecting} can be interpreted as giving the independence number of the $r$-stable Kneser graph, where there is an edge between two $r$-stable sets (or polygons) if they are disjoint, and characterizes these maximal sets. Theorem~\ref{prop:size} strengthens \cite[Theorem~3]{talbot2003intersecting}, extending the upper bound to the spanning subgraph $\text{IG}^{(r)}_{n,k}$ of the $r$-stable Kneser graph. The coloring given in Section~\ref{s.low_bound_c} shows that the characterization of the maximum independent sets in \cite[Theorem~3]{talbot2003intersecting} does not extend to $\text{IG}^{(r)}_{n,k}$.

\section{The independence number: proof of Theorem~\ref{prop:size}}

For the proof of Theorem~\ref{prop:size} we follow the arguments of Talbot~\cite{talbot2003intersecting}. Let us begin by showing a preliminary lemma.

\begin{lemma} \label{lem:point}
The number of $r$-stable $k$-polygons on $[n]$ containing a specific point on the circle is $\binom{n-(r-1)k-1}{k-1}$. In particular, the number of vertices of $\mathrm{IG}_{n,k}^{(r)}$ is $\frac{n}{k} \binom{n-(r-1)k-1}{k-1}$.
\end{lemma}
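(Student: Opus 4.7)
The plan is to count the $r$-stable $k$-polygons containing a fixed point by a gap-counting argument, and then deduce the total number of vertices via a simple double-counting.

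First I would fix a specific point $p$ on the circle (say $p = n$) and parametrize the $r$-stable $k$-polygons containing $p$. Any such polygon consists of $p$ together with $k-1$ other points; listing the $k$ chosen points in clockwise order starting from $p$, they partition the circle into $k$ arcs. Let $g_1,\dots,g_k$ denote the numbers of unchosen points in these consecutive arcs. Then $g_1+\dots+g_k = n-k$, and the $r$-stability condition (every two chosen points are at circular distance at least $r$) is equivalent to $g_i \geq r-1$ for each $i$. Conversely, any such tuple $(g_1,\dots,g_k)$ determines a unique polygon containing $p$, so the $r$-stable $k$-polygons through $p$ are in bijection with these tuples.

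Next I would apply the standard substitution $h_i := g_i - (r-1) \geq 0$. The constraint becomes $h_1+\dots+h_k = n - k - k(r-1) = n - kr$ with $h_i \geq 0$, and the number of such tuples is, by stars and bars,
\[
\binom{n - kr + k - 1}{k-1} = \binom{n-(r-1)k-1}{k-1}.
\]
This establishes the first assertion. The only place where care is needed is keeping the off-by-ones straight (a circular distance of $r$ corresponds to $r-1$ intermediate points), but this is routine.

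For the ``in particular'' part, I would use a standard double count on pairs $(x, P)$ where $P$ is an $r$-stable $k$-polygon and $x \in P$. Counted by polygon, there are $k$ points per polygon; counted by point, the rotational symmetry of the circle together with the first statement shows that each of the $n$ points lies in exactly $\binom{n-(r-1)k-1}{k-1}$ polygons. Equating the two counts yields that the total number of $r$-stable $k$-polygons is $\tfrac{n}{k}\binom{n-(r-1)k-1}{k-1}$, as required. There is no real obstacle here; the argument is purely enumerative.
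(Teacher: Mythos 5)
Your argument is essentially the same as the paper's: you parametrize polygons through a fixed point by the $k$ gap sizes between consecutive chosen points (the paper uses the $k$ consecutive distances $a_i = g_i + 1 \geq r$ summing to $n$, which is an equivalent parametrization), shift variables to reduce to a standard stars-and-bars count, and then obtain the vertex count by the same double count of point–polygon incidences. The proof is correct and matches the paper's reasoning.
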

\begin{proof} 
After fixing the point that all polygons have to contain, we see that we have to choose $k-1$ elements out of $n-1$ elements while respecting the minimum distance of $r$. So, we have to choose~$k$ distances~$a_1,\ldots,a_k$, all at least~$r$, such that~$\sum_{i=1}^k a_i =n$. The number of ways this can be done is 
\begin{align*}
|\{(a_1,\ldots,a_{k}) \in \Z_{\geq r}^{k} \mid \mbox{$\sum_{i=1}^k a_i = n$}\}| &= |\{(a_1,\ldots,a_{k}) \in \Z_{\geq 1}^k \mid \mbox{$\sum_{i=1}^k a_i = n-(r-1)k$}\}| \\ &=  \mbox{$\binom{n-(r-1)k-1}{k-1}$},
\end{align*}
proving the first statement. The second assertion follows from the first by a standard double counting argument.
\end{proof}

\begin{proof}[Proof of Theorem~\ref{prop:size}] The lower bound follows from Lemma~\ref{lem:point}. To show the upper bound, we use a double induction on $n$, and on $k$. The statement holds true for all~$n$ of the form~$n=kr$ with $k \in \N$, which are the base cases of the double induction, as in this case the graph is a complete graph on $r$ vertices $\{1,r+1,\ldots,(k-1)r+1\},\{2,r+2,\ldots,(k-1)r+2\},\ldots,\{r,2r,\ldots,kr\}$. 
Let $\binom{[n]}{k}^{(r)}$ denote the set of $r$-stable $k$-subsets of $[n]$, i.e., the vertex set of $\text{IG}^{(r)}_{n,k}$.

	Let 
	\[
	f:[n]\to [n-1], \hspace{2mm} f(i) := \left\{\begin{array}{ll} 1 & \text{if $i = 1$,}\\ i-1 & \text{otherwise,}\end{array}\right.
	\]
	be the function that fixes $1$ and shifts all the other elements counterclockwise.
	For a $k$-polygon $\{p_1,\ldots,p_k\}=P\in \binom{[n]}{k}$, we define $f(P):=\{f(p_1),\ldots,f(p_k)\}$. For a set $\mathcal{I}$ of $k$-polygons, we define $f(\mathcal{I}):=\{f(P)\}_{P\in \mathcal{I}}$. Applying $f$ several, say $r$ times, is denoted as $f^r$. 
	

We study the effect of $f$ on $k$-polygons.
Let $P\in \binom{[n]}{k}^{(r)}$. Then $f(P)\in \binom{[n]}{k}^{(r)}$, unless $1\in P$ and $r+1\in P$, in which case $f(P)$ is $(r-1)$-stable, but not $r$-stable.
Furthermore, $f(P)\in \binom{[n-1]}{k}^{(r)}$, unless $p_k-n< 1<p_1$ and $p_1+n-p_k= r$. In the latter case, $f(P)\in \binom{[n-1]}{k}$ is $(r-1)$-stable but not $r$-stable, since $f(p_k)<n<f(p_1)+n$  and $f(p_1)-f(p_k)+n-1=r-1$.

If $P,Q$ are $k$-polygons with $P\neq Q$ and $p_1\leq q_1$ and such that $f(P)=f(Q)$, then from the definition of $f$ it follows that $1\in P$, and $2\in Q$, and $p_i=q_i$ for all $i\in [2,k]$ (as otherwise either everything is left-shifted if $1\notin P$ or there is no collapse between $f(p_1)$ and $f(q_1)$ if $2\notin Q$). 

Any given set $\mathcal{I}\subseteq \binom{[n]}{k}^{(r)}$ can be partitioned into the following subsets:
\begin{itemize}
	\item $\mathcal{I}_{\text{int}}:=\{P\in \mathcal{I}\; |\; 1\in P \text{ and } \exists Q\in \mathcal{I} \text{ with } f(P)=f(Q)\}$. 
	\item $\mathcal{I}_{c}:=\{P\in \mathcal{I}\;|\; 1\in P \text{ and } r+1\in P\}$. 
	\item  $\mathcal{I}_{1}:=\{P\in \mathcal{I}\;|\; r\in P \text{ and } n\in P\}$.
	\item $\mathcal{I}_{2}:=\{P\in \mathcal{I}\;|\; r-1\in P \text{ and } n-1\in P\}$.
	\item $\ldots$
	\item $\mathcal{I}_{r-1}:=\{P\in \mathcal{I}\;|\; 2\in P \text{ and } n-r+2\in P\}$.
	\item $\mathcal{I}_{\text{rem}}:=\mathcal{I}\setminus \left(\mathcal{I}_{\text{int}}\sqcup \mathcal{I}_{c}\sqcup \left(\sqcup_{i=1}^{r-1} \mathcal{I}_{i}\right)\right)$. 
\end{itemize}
Indeed, any pair of the sets defined above are disjoint. For instance, $\mathcal{I}_{c}\cap \mathcal{I}_{\text{int}}=\emptyset$ since any element $Q\in \mathcal{I}$ with $f(Q)=f(P)$ for some $P\in \mathcal{I}_{\text{int}}$ is such that $r+1\notin Q$ (since $2\in Q$ and $Q$ is $r$-stable). The disjointness between any other pair of sets follows by $r$-stability, or by definition in the case of  $\mathcal{I}_{\text{rem}}$.

Notice that the sets $\mathcal{I}_{i}$ for $i\in[r-1]$ all contain $1$ strictly between two consecutive points at distance exactly $r$. 
 Let us also observe that if $Q\in\mathcal{I}$ is such that $2\in Q$ and there exists a $P\in\mathcal{I}$ with $f(Q)=f(P)$, then $Q\in \mathcal{I}_{\text{rem}}$ and $P\in \mathcal{I}_{\text{int}}$ (as the other option is that $Q\in \mathcal{I}_{r-1}$, which should be excluded as then $P$ would not be $r$-stable). 

Asssume that $\mathcal{I}$ is a set of $r$-stable $k$-polygons pairwise non-interlacing. Then:
\begin{enumerate}[\text{Claim} 1:]
	\item\label{en.1} We have $|f(\mathcal{I}_{\text{rem}})|=|\mathcal{I}_{\text{rem}}|$.
	\item\label{en.2} We have $f(\mathcal{I}_{\text{rem}})\subseteq \binom{[n-1]}{k}^{(r)}$.
	\item \label{en.25} Every pair of polygons in $f(\mathcal{I}_{\text{rem}})$ is pairwise non-interlacing.
	\item\label{en.3} We have $|f^{r-1}(\mathcal{I}_{\text{int}}\sqcup\mathcal{I}_{c} \sqcup \left(\sqcup_{i=1}^{r-1} \mathcal{I}_{i}\right))|=|\mathcal{I}_{\text{int}}\sqcup \mathcal{I}_{c} \sqcup \left(\sqcup_{i=1}^{r-1} \mathcal{I}_{i}\right)|$, where $\sqcup$ denotes the disjoint union.
	\item\label{en.4} 
 All the $k$-polygons in the set $f^{r-1}(\mathcal{I}_{\text{int}}\sqcup \mathcal{I}_{c}\sqcup \left(\sqcup_{i=1}^{r-1} \mathcal{I}_{i}\right))$ contain $1$. Upon removal of $1$ from these $k$-polygons, we can view them as elements in $\binom{[2,n-r+1]}{k-1}^{(r)}$. So
 	\[\left\{P\setminus \{1\}\; |\; P\in f^{r-1}(\mathcal{I}_{\text{int}}\sqcup\mathcal{I}_{c} \sqcup \left(\sqcup_{i=1}^{r-1} \mathcal{I}_{i}\right)\right\}\subseteq \binom{[2,n-r+1]}{k-1}^{(r)}\equiv \binom{[n-r]}{k-1}^{(r)}.\]
	\item  \label{en.5} Every pair of $(k-1)$-polygons in $\{P\setminus \{1\}\; |\; P\in f^{r-1}\left(\mathcal{I}_{\text{int}}\sqcup\mathcal{I}_{c} \sqcup \left(\sqcup_{i=1}^{r-1} \mathcal{I}_{i}\right)\right)\}$ is pairwise non-interlacing.
\end{enumerate}

Let us show the six claims, starting with Claim~\ref{en.1}. If $P$ and $Q$ are $k$-polygons for which $f(P) = f(Q)$, then, without loss of generality, $P \in \mathcal{I}_{\text{int}}$ and $Q \in \mathcal{I}_{\text{rem}}$. Additionally, for every $P \in \mathcal{I}_{\text{int}}$, there is a unique $k$-polygon $Q$ for which $f(P)=f(Q)$. Indeed, if $P$ is of the form $P=\{1= p_1<p_2<\cdots<p_k\}$ then $Q=\{2=q_1<p_2<\cdots<p_k\}$. This proves Claim~\ref{en.1}.

Claim \ref{en.2} follows as none of the $k$-polygons in the image of $\binom{[n]}{k}^{(r)}$ under $f$ contains $n$. Additionally, if $P\in \mathcal{I}_{\text{rem}}$, then $f(p_1)-(f(p_k)-n)\geq r+1$ (the $\Z_{n}$-cyclic distance between $p_1=1$ and $p_k$ is at least $r$, and $n$ lies between $f(p_k)$ and $f(p_1)+n$). Moreover, since $P\in \mathcal{I}_{\text{rem}}$, then $P\notin \mathcal{I}_{\text{c}}$ so $f(P)$ is $r$-stable. Therefore, we can view the image of $\mathcal{I}_{\text{rem}}\subset\binom{[n]}{k}^{(r)}$ under $f$ as a collection of $r$-stable $k$-polygons in $[n-1]$.

For Claim~\ref{en.25}, let $P$ and $Q$ be in $\mathcal{I}_{\text{rem}}$ and assume that $f(P)$ and $f(Q)$ interlace. Assume without loss of generality that $f(p_i)< f(q_i)<f(p_{i+1})<f(q_{i+1})$ for all $i\in[k-1]$. Then we have $p_i< q_i<p_{i+1}<q_{i+1}$ for all $i\in[k-1]$, as $f$ is non-decreasing, and hence $P$ and $Q$ would interlace. This proves Claim~\ref{en.25}.

Next, observe that $|f^{r-1}(\mathcal{I}_{\text{int}})|=|\mathcal{I}_{\text{int}}|$. Indeed, all polygons in $\mathcal{I}_{\text{int}}$ contain $1$, therefore their subsequent images are obtained by rotating the remaining $k-1$ points of the polygon $r-1$ steps counterclockwise, and thus two different $k$-polygons remain different. The same argument shows also that $|f^{r-1}(\mathcal{I}_{\text{c}})|=|\mathcal{I}_{\text{c}}|$, with the difference that every polygon in $f^{r-1}(\mathcal{I}_{\text{c}})$ contains $1$ and $2$ (and if $P,Q\in \mathcal{I}_{\text{c}}$ are different, they differ in a $j$-th point, with $j>2$, and this difference will be maintained in the $r-1$ rotations by $f$). The remaining cases $|f^{r-1}(\mathcal{I}_{\text{i}})|=|\mathcal{I}_{\text{i}}|$ for $i\in [r-1]$ follow similarly, as the polygons in $f^{r-1}(\mathcal{I}_{\text{i}})$ contain $1$ and $n-(r-1)-i+1$. 

To finish showing Claim~\ref{en.3} observe that the polygons in 
$f^{r-1}(\mathcal{I}_{\text{int}})$ contain $1$ but not $2$, and that their last element (when writing them in order) is less than or equal to $n-r+1-(r-1)$ (as the largest element in each $k$-polygon in $\mathcal{I}_{\text{int}}$ is less than or equal to $n-r+1$). The $k$-polygons in $f^{r-1}(\mathcal{I}_{\text{c}})$ contain $1$ and $2$ and their last element is less than or equal to $n-r+1-(r-1)$. The $k$-polygons in
$f^{r-1}(\mathcal{I}_{\text{i}})$ contain $1$ and $n-i+1-(r-1)$, which is such that $n-i+1-(r-1)> n-r+1-(r-1)$ as $i\in[1,r-1]$. Therefore, the images of all these sets by $f^{r-1}$ are pairwise disjoint and Claim~\ref{en.3} follows.

We have shown that each $k$-polygon in $f^{r-1}(\mathcal{I}_{\text{int}}\sqcup\mathcal{I}_{c} \sqcup \left(\sqcup_{i=1}^{r-1} \mathcal{I}_{i}\right))$ contains $1$. 
The other part follows by observing that applying the map $f$ to a $k$-polygon $r-1$ times and then removing $1$ from the resulting polygon is equivalent to first removing the minimal element of that polygon, then removing the next $r-1$ points to the left (in the cyclic order) in the underlying set (so $p_1,p_1-1,p_2-2,\ldots,p_1-r+1$, elements modulo $n$, are deleted) and then rotating the resulting polygon $r-1$ times, so that all the remaining $k-1$ points are in $[2,n-r+1]$. This proves Claim~\ref{en.4}.

Let us show Claim~\ref{en.5} via case analysis.
\begin{speciaalenumerate}
	\item If $P,Q\in \mathcal{I}_{i}$, with $i\in[r-1]$ then $f^{r-1}(P)\setminus \{1\}$ and $f^{r-1}(Q)\setminus \{1\}$ do not interlace as $n-i+1-(r-1)\in f^{r-1}(P)\setminus \{1\}$ and $n-i+1-(r-1)\in f^{r-1}(Q)\setminus \{1\}$.
		\item Similarly, if $P,Q\in \mathcal{I}_{\text{c}}$  then $2\in f^{r-1}(Q)\setminus \{1\}$ and $2\in f^{r-1}(P)\setminus \{1\}$, so $f^{r-1}(P)\setminus \{1\}$ and $f^{r-1}(Q)\setminus \{1\}$ do not interlace.
	\item Let $P\in \mathcal{I}_i$ and $Q\in \mathcal{I}_j$ with $i<j\leq r-1$, and assume $f^{r-1}(P)\setminus \{1\}$ and $f^{r-1}(Q)\setminus \{1\}$ interlace. Since $p_k<q_k$, then $f^{r-1}(p_k)<f^{r-1}(q_k)$. Therefore $f^{r-1}(p_2)<f^{r-1}(q_2)<\ldots<f^{r-1}(p_k)<f^{r-1}(q_k)$, so $p_2<q_2<\ldots<p_k<q_k$ as $f^{r-1}(x)$ is increasing in $x\in[r,n]$. Therefore 
	\[
	\{r-i+1<r-j+1<p_2<q_2<\ldots<p_k<q_k\}=\{p_1<q_1<p_2<q_2<\ldots<p_k<q_k\},
	\] so $P$ and $Q$ interlace in $\mathcal{I}$, a contradiction.

	\item Let $P\in \mathcal{I}_i$ and $Q\in \mathcal{I}_{\text{c}}$ with $i\in[r-1]$ and assume $f^{r-1}(P)\setminus \{1\}$ and $f^{r-1}(Q)\setminus \{1\}$ interlace. Then $2\in f^{r-1}(Q)\setminus \{1\}$. Hence, $f^{r-1}(q_2)=2<f^{r-1}(p_2)<\ldots<f^{r-1}(q_k)<f^{r-1}(p_k)$. As $f$ is increasing,
	\begin{align}
	q_2&<p_2<\ldots<q_k<p_k  \,\,\, \Longrightarrow \,\,\, \nonumber \\ \{1<r-j+1<r+1<p_2&<\ldots<q_k<p_k \}=\{q_1<p_1<q_2<p_2<\ldots<q_k<p_k \},\nonumber
	\end{align}
	so $P$ and $Q$ interlace in $\mathcal{I}$, a contradiction.
	
	\item Let $P\in \mathcal{I}_{\text{int}}$ and $Q\in \mathcal{I}_{\text{c}}$ and assume $f^{r-1}(P)\setminus \{1\}$ and $f^{r-1}(Q)\setminus \{1\}$ interlace.
	Then $2\in f^{r-1}(Q)\setminus \{1\}$, so $2\notin f^{r-1}(P)\setminus \{1\}$ and thus $f^{r-1}( p_2)>2$. Therefore,
	\[
	2=f^{r-1}(q_2)< f^{r-1}( p_2)<\ldots <f^{r-1}(q_k)< f^{r-1}( p_k)  \,\,\, \Longrightarrow \,\,\,
	q_2<  p_2<\ldots <q_k< p_k.
	\]
	Let $P'\in \mathcal{I}$ be the polygon for which $f(P)=f(P')$. Then
	$\{1<2<q_2<  p_2'<\ldots <q_k< p_k'\}=\{q_1<p_1'<q_2<  p_2'<\ldots <q_k< p_k'\}$.
	
	The case $P\in\mathcal{I}_{\text{int}}$ and $Q\in \mathcal{I}_{\text{i}}$ with $i\in [r-1]$ follows similarly but by directly using $P$ instead of $P'$ and noticing that, in that case, $1=p_1<q_1<\ldots<p_k<q_k$ since $p_k\leq n-r+1$ while $q_k=n-i+1\geq n-r+2$ (so $q_k>p_k$), and $q_1$ satisfies $r+1> q_1=r-i+1\geq 2$.
	
	\item Let $P,Q\in \mathcal{I}_{\text{int}}$ and assume that $f^{r-1}(P)\setminus \{1\}$ and $f^{r-1}(Q)\setminus \{1\}$ interlace. Assume without loss of generality that 
	$f^{r-1}(q_2)< f^{r-1}( p_2)<\ldots <f^{r-1}(q_k)< f^{r-1}( p_k)$. Then 
	$q_2<  p_2<\ldots <q_k<  p_k$. Let $P'$ be the polygon in $\mathcal{I}$ with $f(P)=f(P')$. Then $q_2<  p_2'<\ldots <q_k<  p_k'$ as $p_i'=p_i$ for $i\in[2,k]$. Then $1<2<q_2<  p_2'<\ldots <q_k<  p_k'$ so
	\[
	\{1<2<q_2<  p_2'<\ldots <q_k<  p_k'\}=\{q_1<p_1'<q_2<  p_2'<\ldots <q_k<  p_k'\}
	\] since $r\geq 2$. Thus $P'$ and $Q$ interlace, which is a contradiction.
\end{speciaalenumerate}
Let us finish the argument to show the proposition. Using the induction hypothesis we have
\[|f(\mathcal{I}_{\text{res}})|\leq \binom{n-1-(r-1)k-1}{k-1},\]
 as $f(\mathcal{I}_{\text{res}})$ is a set of mutually non-interlacing polygons in $\binom{[n-1]}{k}^{(r)}$ by Claim~\ref{en.2} and Claim~\ref{en.25}. Therefore 
 \[|\mathcal{I}_{\text{res}}|\leq \binom{n-1-(r-1)k-1}{k-1}\] as $|\mathcal{I}_{\text{res}}|=|f(\mathcal{I}_{\text{res}})|$ by Claim~\ref{en.1}.
Moreover, 
\[\{P\setminus \{1\}\;|\; P\in f^{r-1}(\mathcal{I}_{\text{int}}\sqcup\mathcal{I}_{\text{c}} \sqcup \left(\sqcup_{i=1}^{r-1} \mathcal{I}_{i}\right))\}\subset \binom{[2,n-r+1]}{k-1}^{(r)}\]
 by Claim~\ref{en.4} and contains mutually non-interlacing pairs of $k-1$-polygons by Claim~\ref{en.5}. Then, by the induction hypothesis 
\begin{align}
|f^{r-1}(\mathcal{I}_{\text{int}}\sqcup\mathcal{I}_{\text{c}} \sqcup \left(\sqcup_{i=1}^{r-1} \mathcal{I}_{i}\right))|
&\stackrel{\text{Claim}~\ref{en.4}}{=}|\{P\setminus \{1\}\;|\; P\in f^{r-1}(\mathcal{I}_{\text{int}}\sqcup\mathcal{I}_{\text{c}} \sqcup \left(\sqcup_{i=1}^{r-1} \mathcal{I}_{i}\right))\}|\nonumber \\
&\stackrel{\text{I.H.}}{\leq} \binom{n-r-(r-1)(k-1)-1}{k-2}.\nonumber \end{align}
 Utilizing Claim~\ref{en.3}, 
\[|\mathcal{I}_{\text{int}}\sqcup\mathcal{I}_{\text{c}} \sqcup \left(\sqcup_{i=1}^{r-1} \mathcal{I}_{i}\right)|\leq \binom{n-r-(r-1)(k-1)-1}{k-2}=\binom{n-(r-1)k-2}{k-2}.\]
Finally, using that $\mathcal{I}=\mathcal{I}_{\text{res}}\sqcup\mathcal{I}_{\text{int}}\sqcup\mathcal{I}_{\text{c}} \sqcup \left(\sqcup_{i=1}^{r-1} \mathcal{I}_{i}\right)$ we obtain
\begin{align*}
|\mathcal{I}|&=|\mathcal{I}_{\text{res}}|+|\mathcal{I}_{\text{int}}\sqcup\mathcal{I}_{\text{c}}\sqcup \left(\sqcup_{i=1}^{r-1} \mathcal{I}_{i}\right)|\\
&\leq \binom{n-1-(r-1)k-1}{k-1}+\binom{n-(r-1)k-2}{k-2}\\
&=\binom{n-(r-1)k-1}{k-1}
\end{align*}
as desired.
\end{proof}

\section{The circular chromatic number: proof of Theorem~\ref{thm:chrom}}

\subsection{Lower bound for the circular chromatic number}

We determine the lower bound of the circular chromatic number for the interlacing graph.

%

\begin{lemma}\label{cor:circ_low}
We have $\chi_{\mathrm{c}}\left(\mathrm{IG}_{n,k}^{(r)}\right)\geq n/k$.
\end{lemma}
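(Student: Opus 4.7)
The plan is to combine the counting formula from Lemma~\ref{lem:point} for the number of vertices with the upper bound on the independence number from Theorem~\ref{prop:size} via the standard double-counting argument that relates circular colorings to independent sets.

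Concretely, suppose $\chi : V(\mathrm{IG}_{n,k}^{(r)}) \to \mathbb{Z}/N\mathbb{Z}$ is a circular coloring of size $N/K$, so that $\chi(u)-\chi(v) \in \{\overline{K},\overline{K+1},\ldots,\overline{-K}\}$ for every edge $uv$. For each $i \in \mathbb{Z}/N\mathbb{Z}$, I would define the ``arc preimage''
\[
S_i := \chi^{-1}\bigl(\{\overline{i},\overline{i+1},\ldots,\overline{i+K-1}\}\bigr).
\]
The first step is to check that each $S_i$ is an independent set in $\mathrm{IG}_{n,k}^{(r)}$: if $u,v \in S_i$ were adjacent, then $\chi(u)-\chi(v)$ would lie in $\{\overline{-(K-1)},\ldots,\overline{K-1}\}$, which is disjoint from $\{\overline{K},\ldots,\overline{-K}\}$ whenever $N \geq 2K$ (and $N < 2K$ can be excluded since otherwise no circular coloring exists).

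Next I would apply Theorem~\ref{prop:size} to each $S_i$, yielding $|S_i| \leq \binom{n-(r-1)k-1}{k-1}$. A standard double count of the pairs $(v,i)$ with $v \in S_i$ shows that each vertex lies in exactly $K$ of the arc preimages (namely those indexed by $\chi(v)-K+1,\ldots,\chi(v)$), so
\[
K \cdot |V(\mathrm{IG}_{n,k}^{(r)})| \;=\; \sum_{i \in \mathbb{Z}/N\mathbb{Z}} |S_i| \;\leq\; N \cdot \binom{n-(r-1)k-1}{k-1}.
\]
Invoking Lemma~\ref{lem:point} to substitute $|V(\mathrm{IG}_{n,k}^{(r)})| = \tfrac{n}{k}\binom{n-(r-1)k-1}{k-1}$ and dividing gives $N/K \geq n/k$, which is the desired inequality.

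There is no real obstacle here; the only thing to be careful about is the modular arithmetic in the independence verification (checking that the two ``arcs'' of $\mathbb{Z}/N\mathbb{Z}$ really are disjoint, which uses $N \geq 2K$), and the observation that the hypotheses $kr \leq n$ and $r \geq 2$ ensure the binomial coefficient appearing in Theorem~\ref{prop:size} is nonzero so that the ratio is well-defined.
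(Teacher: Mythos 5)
Your proof is correct and takes essentially the same route as the paper: both deduce the bound from the ratio $|V(\mathrm{IG}_{n,k}^{(r)})|/\alpha(\mathrm{IG}_{n,k}^{(r)}) = n/k$, computed from Lemma~\ref{lem:point} and Theorem~\ref{prop:size}. The only difference is presentational: the paper cites the chain $\chi_c(G)\geq\chi_f(G)\geq|V(G)|/\alpha(G)$ from the literature, whereas you unfold it into a self-contained double-counting argument over the arc preimages $S_i$, which is exactly the standard proof of those cited inequalities.
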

\begin{proof}
The circular chromatic number $\chi_{c}(G)$ of a finite graph $G$ is lower bounded by the fractional chromatic number $\chi_f(G)$. Moreover, the fractional chromatic number $\chi_f(G)$ is bounded from below by the number of vertices of $G$ divided by the independence number of $G$ \cite[page 30]{ScheiUll11}. These two have been determined respectively in Theorem~\ref{prop:size} and in Lemma~\ref{lem:point} and their ratio is $n/k$. Hence the result is shown.
\end{proof}

\subsection{A circular coloring matching the lower bound}\label{s.low_bound_c}

To give a valid circular coloring of size $n/k$, we begin by presenting the following auxiliary technical lemma and its consequence, Corollary~\ref{cor_alt}.

%
%
%
%

\begin{lemma} \label{lem:perm_alt}
Let $y_1,\ldots,y_k \in \mathbb{R}_{\geq 0}$ and let $\sum_{i=1}^k y_i = z$. Then there exists a $j_0\in [k]$ such that for all $m \in [k]$ we have $\sum_{i = j_0}^{j_0+m-1} y_{i} \geq mz/k$, where the indices are taken modulo $k$. Moreover, either there exists an $m' \in [k]$ for which $\sum_{i = j_0}^{j_0+m'-1} y_{i} > m'z/k$, or $y_i=z/k$ for each $i\in[k]$.
\end{lemma}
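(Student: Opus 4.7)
The plan is to use a standard ``cycle lemma'' argument based on the partial sums. Set $s_0 := 0$ and $s_j := \sum_{i=1}^j y_i$ for $j \in [k]$, so that $s_k = z$, and define the deviations $t_j := s_j - jz/k$. Note $t_0 = t_k = 0$. Let $j_0 \in [k]$ be chosen so that $t_{j_0-1}$ attains the minimum of $\{t_0, t_1, \ldots, t_{k-1}\}$. I will show that this $j_0$ works.

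Given $m \in [k]$, there are two cases depending on whether the window $[j_0, j_0 + m - 1]$ wraps around. If $j_0 + m - 1 \leq k$, then
\[
\sum_{i=j_0}^{j_0+m-1} y_i \;=\; s_{j_0+m-1} - s_{j_0-1} \;=\; (t_{j_0+m-1} - t_{j_0-1}) + \tfrac{mz}{k} \;\geq\; \tfrac{mz}{k},
\]
using minimality of $t_{j_0-1}$. If $j_0 + m - 1 > k$, write $\ell := j_0 + m - 1 - k \in \{0,1,\ldots,k-1\}$; then
\[
\sum_{i=j_0}^{k} y_i + \sum_{i=1}^{\ell} y_i \;=\; (z - s_{j_0-1}) + s_\ell \;=\; (t_\ell - t_{j_0-1}) + \tfrac{mz}{k} \;\geq\; \tfrac{mz}{k},
\]
again by minimality. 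This establishes the first assertion.

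For the moreover clause, suppose no strict inequality ever occurs, i.e.\ $\sum_{i=j_0}^{j_0+m-1} y_i = mz/k$ for every $m \in [k]$. Taking consecutive differences (subtracting the case $m-1$ from the case $m$) gives $y_{j_0 + m - 1} = z/k$ for each $m \in [k]$, with indices modulo $k$; since $m$ ranges over all of $[k]$, this means $y_i = z/k$ for every $i \in [k]$, as desired.

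I do not anticipate a substantive obstacle: the only delicate point is bookkeeping the wrap-around carefully (which is handled by splitting the sum at index $k$ and using $s_k = z$), together with the standard observation that the sequence of partial sums $t_j$ on a cycle admits a starting point below which it never descends.
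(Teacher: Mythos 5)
Your proof is correct, and it takes a genuinely different route from the paper. The paper proceeds by induction on the number $t$ of nonzero entries: it locates a nonzero $y_{q_l}$ dominating its "gap" to the next nonzero entry, merges $y_{q_l}$ and $y_{q_{l+1}}$ into a single coordinate to reduce $t$, and then transfers the $j_0$ produced by the inductive hypothesis back to the original sequence. Your argument is the classical cycle-lemma (Dvoretzky--Motzkin/Raney) approach: introduce the deviations $t_j = s_j - jz/k$, pick $j_0$ so that $t_{j_0-1}$ minimizes $\{t_0,\ldots,t_{k-1}\}$, and read off the inequalities directly from minimality, splitting the cyclic window at index $k$ and using $s_k = z$ (equivalently $t_k = t_0 = 0$) to handle wrap-around. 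Your approach is shorter, non-inductive, and makes the structure of the proof transparent; the paper's inductive merging argument is more hands-on but does essentially the same bookkeeping in disguise (the merge step is what lets it avoid naming the partial sums). Your treatment of the "moreover" clause by taking consecutive differences matches the paper's in spirit. One tiny remark: in the wrap-around case you allow $\ell = 0$, but in fact $\ell \geq 1$ there; this is harmless since $t_0$ is in the set you minimized over anyway.
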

\begin{proof}
Let $t$ be the number of $y_i$ that are unequal to $0$. We show the result with an induction on $t$. If $t$ is $0$ or $1$, then there is nothing to prove. Now assume $t > 1$ and suppose the result is true for all smaller $t$. Let $y_{q_1},...,y_{q_t}$ be the non-zero elements of $y_1,...,y_k$ with $q_1 < ... < q_t$. For $i = 1,...,t-1$, let $n_i = q_{i+1} - q_i$, and set $n_t = q_1-q_t+k$ (i.e., $n_i-1$ is the number of zeros between $q_i$ and $q_{i+1}$). Then $\sum_{i =1}^tn_iz/k = z$ and $\sum_{i=1}^t y_{q_i} = z$, so there exists an $l \in [t]$ such that $y_{q_l} \geq n_lz/k$. 
Now let $x_1,...,x_k$ be defined as $x_{q_l} = y_{q_l} + y_{q_{l+1}}$, $x_{q_{l+1}} = 0$ and $x_i = y_i$ for all other indices. 
Clearly, $t-1$ elements of the $x_i$ are unequal to $0$, so there exists some $j_0 \in [k]$ such that 
\[
\sum_{i = j_0}^{j_0 + m -1} x_i \geq mz/k,
\]
for all $m \in [k]$. Now notice that this implies the first statement of the lemma, namely that
\[
\sum_{i = j_0}^{j_0 + m -1} y_i \geq mz/k, 
\]
for all $m \in [k]$. \\
\indent The second assertion of the lemma follows from the first since if for all $m\in[k]$, both $\sum_{i = j_0}^{j_0+m-1} y_i=mz/k$ and $\sum_{i = j_0}^{j_0+(m+1)-1} y_i=(m+1)z/k$, then $y_{j_0+m}=z/k$.
\end{proof}

%

For a $k$-polygon $P = \{1 \leq y_1 < ... < y_k \leq n\}$, define the $k$-tuple $s(P)$ of distances between the consecutive points by
\[
s(P):=(y_2-y_1,y_3-y_2,\ldots,y_1-y_k+n) \in \Z_{\geq 1}^k.
\]
We call $s(P)$ the \textit{shape} of $P$. We say that the $k$-polygon with points $y_1+1,...,,y_k+1$ is obtained from $P$ by a clockwise rotation of $1$ (the addition is modulo $n$). Similarly, for $i \geq 0$ the $k$-polygon obtained by rotating clockwise $i$ times is denoted by $\rho_i(P)$.

\begin{corollary} \label{cor_alt}
Let $P = \{1 \leq y_1 < ... < y_k \leq n\}$ be a $k$-polygon and write $s(P) = (d_1,...,d_k)$ for the shape of $P$. Then there is a~$j_0 \in [k]$ such that for~$i' = n- y_{j_0}$ we have
\begin{align}\label{condities}
n \in \rho_{i'}(P), \; &\text{and }\; |\rho_{i'}(P) \cap \{1,\ldots,\lc m n/k \rc-1\}| < m,
\end{align}
for all $m \in [k]$. Thus, for $m \in [k]$,
\begin{align}\label{condities_2}
|\rho_{i'}(P) \cap \{1,\ldots,\lf m n/k \rf\}| = m &\Longleftrightarrow \lf m n/k \rf=mn/k \text{ and } \sum_{i=j_0}^{j_0+m-1}d_i=mn/k,\\
\label{condities_3}
|\rho_{i'}(P) \cap \{1,\ldots,\lc m n/k \rc\}| = m 
&\Longrightarrow \lc m n/k \rc\in \rho_{i'}(P).
\end{align}
\end{corollary}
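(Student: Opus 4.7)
The plan is to apply Lemma~\ref{lem:perm_alt} to the tuple $(d_1,\ldots,d_k)$ with sum $z=n$. This yields a $j_0\in[k]$ such that the partial sums
\[
D_m := \sum_{i=j_0}^{j_0+m-1} d_i \;\geq\; mn/k \qquad \text{for all } m\in[k],
\]
where indices are taken modulo $k$.

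Next I would analyze the rotated polygon $\rho_{i'}(P)$ with $i' = n-y_{j_0}$. By the definition of the shape, the points of $P$ written in clockwise order starting from $y_{j_0}$ are $y_{j_0},\, y_{j_0}+d_{j_0},\, y_{j_0}+d_{j_0}+d_{j_0+1},\ldots$ (with addition modulo $n$). Adding $i'=n-y_{j_0}$ sends $y_{j_0}$ to $n$, so the points of $\rho_{i'}(P)$ listed in increasing order in $[n]$ are exactly
\[
D_1,\, D_2,\, \ldots,\, D_{k-1},\, D_k=n.
\]
In particular $n\in\rho_{i'}(P)$, settling the first requirement of \eqref{condities}, and the $m$-th smallest element of $\rho_{i'}(P)$ equals $D_m$.

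For the second part of \eqref{condities} I would argue as follows. Since each $D_m$ is an integer and $D_m\geq mn/k$, we have $D_m\geq \lceil mn/k\rceil$. Thus $\rho_{i'}(P)\cap\{1,\ldots,\lceil mn/k\rceil -1\}$ contains at most the elements $D_1,\ldots,D_{m-1}$, giving strictly fewer than $m$ elements, as required.

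Finally, the two implications \eqref{condities_2} and \eqref{condities_3} drop out of the same characterization. The event $|\rho_{i'}(P)\cap\{1,\ldots,\lfloor mn/k\rfloor\}|=m$ means $D_m\leq \lfloor mn/k\rfloor$, which combined with $D_m\geq mn/k$ forces $D_m=mn/k$ to be an integer equal to $\lfloor mn/k\rfloor$, giving \eqref{condities_2}. Similarly, $|\rho_{i'}(P)\cap\{1,\ldots,\lceil mn/k\rceil\}|=m$ means $D_m\leq \lceil mn/k\rceil$, and combined with $D_m\geq \lceil mn/k\rceil$ this forces $D_m=\lceil mn/k\rceil\in\rho_{i'}(P)$, which is \eqref{condities_3}. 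The only mildly delicate point is verifying the correspondence between the ordered points of $\rho_{i'}(P)$ and the partial sums $D_m$ of the shape starting from index $j_0$; once this bookkeeping is done, everything reduces to Lemma~\ref{lem:perm_alt} together with the integrality of the $D_m$.
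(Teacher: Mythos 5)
Your proof is correct and follows essentially the same route as the paper: apply Lemma~\ref{lem:perm_alt} to the shape to get $j_0$, observe that after rotating by $i'=n-y_{j_0}$ the $m$-th smallest point of $\rho_{i'}(P)$ is the partial sum $D_m=\sum_{i=j_0}^{j_0+m-1}d_i$, and then combine $D_m\geq mn/k$ with integrality to get $D_m\geq\lceil mn/k\rceil$. Your explicit identification of the ordered points of $\rho_{i'}(P)$ with the $D_m$ is a slightly cleaner bookkeeping than the paper's $y_{j_0+m}+i'$ (which is implicitly reduced modulo $n$), but the argument is the same.
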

\begin{proof}
We start with assertion~\eqref{condities}. Note that $\sum_{i=1}^{k}d_i = n$. By Lemma~\ref{lem:perm_alt}, there exists a $j_0$ such that $\sum_{i = j_0}^{j_0+m-1} d_{i} \geq mn/k$ for every $m\in[k]$. Let $0 \leq i' < n$ be the unique natural number such that $y_{j_0}+i' = n$, i.e., such that $\rho_{i'}$ rotates the $j_0$-th point of $P$ to the point $n$. We claim that this choice of $i'$ satisfies the conditions in (\ref{condities}). 
The rotated polygon $\rho_{i'}(P)$ is the $k$-polygon $\{1 \leq y_{j_0+1}+i' < ... < y_{j_0}+i' = n\}$. Hence $s(\rho_{i'}(P)) = (d_{j_0+1},...,d_{j_0+k-1},d_{j_0})$, and
\begin{equation}
\sum_{i = j_0}^{j_0+m-1} d_{i}=y_{j_0+m}+i'\geq mn/k,
\end{equation}
for all $m \in [k]$. Hence there exists a~$t \in [k]$ such that $\rho_{i'}(P) \cap \{1,...,\ceil{mn/k}-1\}=\{y_{j_0+1}+i',\ldots,y_{j_0+t}+i'\}$. Using that $y_{j_0+m}+i'\geq mn/k$, so $y_{j_0+m}+i'\geq \lc mn/k\rc$ as $y_{j}$ are integers (as $d_i$ are positive integers), this implies that
\begin{equation}\label{ongelijk}
|\rho_{i'}(P) \cap \{1,...,\ceil{mn/k}-1\}|=|\{y_{j_0+1}+i',\ldots,y_{j_0+t}+i'\}| < m,
\end{equation} 
as desired.


Part \eqref{condities_3} follows immediately. To prove \eqref{condities_2}, note that $\ceil{mn/k}-1=\floor{mn/k}$ unless $mn/k$ is an integer. Furthermore, if $|\rho_{i'}(P) \cap \{1,...,\lf mn/k \rf\}|=m$, by the just proven result, $mn/k=\lf mn/k \rf=y_{j_0+m}+i'$ and so $\sum_{i = j_0}^{j_0+m-1} d_i = y_{j_0+m}+i'= mn/k$ as claimed.
%
%
%
%
%
\end{proof}

Let us notice that $\text{IG}^{(r)}_{n,k}$ is an induced subgraph of $\text{IG}^{(2)}_{n,k}$ for any $r\geq 2$ (or containing no vertices if $n<kr$). Therefore, Theorem~\ref{thm:circ_chrom} is shown if we give an $n/k$ circular coloring to $\text{IG}_{n,k}=\text{IG}^{(2)}_{n,k}$.

We now use the previous results to find large independent sets in the interlacing graph $\textrm{IG}_{n,k}$, that eventually will be color classes of a particular circular coloring.
For a vector $d = (d_1,..,d_k) \in \Z_{\geq 2}^k$ with $\sum_{i=1}^{k}d_i = n$, let $P_d^{\circ}$ be a $k$-polygon with $s(P_d^{\circ}) = d$, such that $P_d^{\circ}$ contains the point $n$, and such that $|P_d^{\circ}\cap \{1,...,\lceil mn/k\rceil -1 \}| < m$, for each $m \in [k]$.
Corollary \ref{cor_alt} guarantees the existence of such a $k$-polygon (then~$j_0=1$ in the corollary). 
The set of $k$-polygons of the form $P_d^{\circ}$ (for some $d$ as above) is an independent set in $\text{IG}_{n,k}$ (as all polygons contain the point $n$). Define
\[
\mathcal{L}_{n,k} := \left\{ P^{\circ}_d  \, \bigg| \, d = (d_1,...,d_k) \in \Z_{\geq 2}^k  \text{ and } \sum_{i=1}^t d_i \geq tn/k \text{ for all } t \in [k]\right\}
\] as the set of such polygons.
The next lemma summarizes the main properties of the polygons in $\mathcal{L}_{n,k}$.

\begin{lemma}\label{lem:independent_rotations}
	For any $i,j \in \Z_{\geq 0}$, the sets 
\[
\rho_{j}(\mathcal{L}_{n,k})\hspace{0mm} \cup \hspace{0mm} \rho_{j+\lfloor i n/k \rfloor} (\mathcal{L}_{n,k}) \hspace{2mm}\mathrm{and}\hspace{2mm} \rho_j(\mathcal{L}_{n,k}) \hspace{0mm}\cup\hspace{0mm} \rho_{j+\lceil i n/k \rceil} (\mathcal{L}_{n,k})
\]
are independent sets in $\mathrm{IG}_{n,k}$, where
	$\rho_{t} (\mathcal{L}_{n,k}):=\{\rho_t(Q)\;|\; Q\in \mathcal{L}_{n,k}\}$, for $t \in \Z_{\geq 0}$.
\end{lemma}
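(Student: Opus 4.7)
The plan is to exploit the rotational symmetry of $\mathrm{IG}_{n,k}$ (rotation is a graph automorphism) to reduce to the case $j=0$, and after reducing $t\in\{\lfloor in/k\rfloor,\lceil in/k\rceil\}$ modulo $n$ to handle the trivial case $t\equiv 0$ separately: there $\rho_t(\mathcal{L}_{n,k})=\mathcal{L}_{n,k}$, which is independent because every member contains $n$. So I assume $1\leq t\leq n-1$ and argue by contradiction that no $P\in\mathcal{L}_{n,k}$ interlaces with any $R=\rho_t(Q)$ for $Q\in\mathcal{L}_{n,k}$.

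Each polygon $P\in\mathcal{L}_{n,k}$ contains $n$ and satisfies the key inequality $p_m\geq\lceil mn/k\rceil$ for every $m\in[k]$ (a reformulation of the defining condition of $\mathcal{L}_{n,k}$). Suppose for contradiction that $P=\{p_1<\cdots<p_k=n\}$ and $R=\{r_1<\cdots<r_k\}=\rho_t(Q)$ interlace. Since $p_k=n$ is maximal in $[n]$, the only feasible interlacing pattern is $r_1<p_1<r_2<p_2<\cdots<r_k<p_k=n$. The point $t\in R$ (the image of $n\in Q$ under $\rho_t$) occupies a unique position $r_{m^*}$, yielding $p_{m^*-1}<t<p_{m^*}$ with the convention $p_0:=0$.

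Sorting $R=\rho_t(Q)$, the points strictly above $t$ are exactly the non-wrapped images $q_j+t$ (those with $q_j\leq n-t$), which appear in order as $r_{m^*+j}=q_j+t$ for $j=1,\ldots,k-m^*$. Combined with the $\mathcal{L}_{n,k}$-inequality $q_j\geq\lceil jn/k\rceil$ and the interlacing constraint $r_{m^*+j}<p_{m^*+j}$, this yields $p_{m^*+j}>\lceil jn/k\rceil+t$ for $j=1,\ldots,k-m^*$. Specializing to $j=k-m^*$ and using $p_k=n$ gives the upper bound $t<\lfloor m^*n/k\rfloor$. The lower bound $t>\lceil(m^*-1)n/k\rceil$ follows from $t>p_{m^*-1}$ together with $p_{m^*-1}\geq\lceil(m^*-1)n/k\rceil$. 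Hence $\lceil(m^*-1)n/k\rceil<t<\lfloor m^*n/k\rfloor$.

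Finally, since $n/k\geq 2$, both maps $i\mapsto\lfloor in/k\rfloor$ and $i\mapsto\lceil in/k\rceil$ are strictly increasing in $i$. The lower bound forces any integer $i$ with $t\in\{\lfloor in/k\rfloor,\lceil in/k\rceil\}$ to satisfy $i\geq m^*$, which in turn gives $t\geq\lfloor m^*n/k\rfloor$, contradicting the upper bound. The main obstacle I anticipate is the sorting bookkeeping for $R=\rho_t(Q)$---identifying wrapped versus non-wrapped images and locating $t$ among them---and checking that the extremal cases $m^*=1$ and $m^*=k$ still yield contradictions, since in each of those one of the two bounds becomes trivial but the other still suffices.
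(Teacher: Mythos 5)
Your proof is correct, and it takes a genuinely different route from the paper's. The paper's proof also reduces to $j=0$, but then proceeds by an interval-counting / pigeonhole argument: it shows that a polygon in $\mathcal{L}_{n,k}$ has at most $i$ points in the cyclic arc from $n$ to $\lfloor in/k\rfloor$ (unless $\lfloor in/k\rfloor \in P$, in which case it has $i+1$), while a rotated polygon $\rho_{\lfloor in/k\rfloor}(Q)$ has at least $i+1$ points in that arc (with $\lfloor in/k\rfloor$ always a member), and concludes non-adjacency either by a count mismatch on the arc (interlacing polygons must have nearly equal counts on arcs whose boundary points belong to them) or by the two polygons sharing the endpoint $\lfloor in/k\rfloor$; the parallel argument for $\lceil in/k\rceil$ is run separately. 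Your proof instead reformulates membership in $\mathcal{L}_{n,k}$ as the pointwise bound $p_m\geq\lceil mn/k\rceil$, locates the image of $n\in Q$ at position $m^*$ in $R=\rho_t(Q)$, and derives the explicit window $\lceil(m^*-1)n/k\rceil<t<\lfloor m^*n/k\rfloor$, which is then shown by monotonicity of $i\mapsto\lfloor in/k\rfloor$ and $i\mapsto\lceil in/k\rceil$ (using $n/k\geq 2$) to be incompatible with $t$ being a reduced value of $\lfloor in/k\rfloor$ or $\lceil in/k\rceil$. Your unified treatment of floor and ceiling is cleaner than the paper's two parallel footnotes; the price is the more delicate index bookkeeping of the sorted $R$. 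One small presentational caveat you already flagged: the derivation of the upper bound $t<\lfloor m^*n/k\rfloor$ by specializing to $j=k-m^*$ is vacuous when $m^*=k$ (there is no such $j$ in $[1,k-m^*]$), so in a polished write-up you should state that for $m^*=k$ the bound $t<n=\lfloor kn/k\rfloor$ holds trivially; the lower bound then still delivers the contradiction, as you noted.
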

\begin{proof}
We may assume that $j=0$, as given any $\lambda\geq 0$: $P\in \rho_{j}(\mathcal{L}_{n,k})$ and $Q\in 
\rho_{j+\lambda} (\mathcal{L}_{n,k})$ are adjacent in $\textrm{IG}_{n,k}$ if and only if $\rho_{-j}(P)\in \mathcal{L}_{n,k}$ and $\rho_{-j}(Q)\in \rho_{\lambda}(\mathcal{L}_{n,k})$ are adjacent as well.

Let $P\in \mathcal{L}_{n,k}$, $Q_1\in \rho_{\lfloor i n/k \rfloor} (\mathcal{L}_{n,k})$, and $Q_2\in \rho_{\lceil i n/k \rceil} (\mathcal{L}_{n,k})$. By Corollary~\ref{cor_alt} and the fact that $n\in P$, either
	\begin{itemize}
		\item $P$ contains at most $i$ points between $n$ and $\lfloor i n/k \rfloor$, or
		\item $P$ contains exactly $i+1$ points between 
	$n$ and $\lfloor i n/k \rfloor$, and then $\lfloor i n/k \rfloor=in/k$  and $in/k\in P$ (the second part of the corollary).
	\end{itemize}
	Additionally, either 
	\begin{itemize}
		\item $P$ contains at most $i$ point between $n$ and $\lceil i n/k \rceil$, or
		\item $P$ contains exactly $i+1$ points between 
	$n$ and $\lceil i n/k \rceil$, and $\lceil i n/k \rceil\in P$.
	\end{itemize}
	By Corollary~\ref{cor_alt}, the polygon $Q_1$ contains at least $i+1$ points
	 between $n$
	  and $\lfloor in/k\rfloor$.
\footnote{$Q_1$ is the rotation of a polygon $Q_1'$ that contains either \begin{itemize}
	 \item  at least $i+1$ points in $[\lceil (k-i) n/k\rceil,n]$, so 
	 $Q_1$ would contain $\geq i+1$ points in $[\lceil (k-i) n/k\rceil +1+\lfloor i n/k\rfloor,\lfloor i n/k\rfloor]\mod n \subseteq [n,\lfloor i n/k\rfloor]$, or 
	 \item exactly $i$ points in $[\lceil (k-i) n/k\rceil +1,n]$, and $\lceil (k-i) n/k\rceil\in Q_1'$, so $Q_1$
 contains $i$ points  in $[\lceil (k-i) n/k\rceil +1 +\lfloor i n/k\rfloor +1,\lfloor i n/k\rfloor] \subseteq [n +1,\lfloor i n/k\rfloor]\mod n$ and $n\in Q_1$, so it contains $i+1$ points in $[n,\lfloor i n/k\rfloor]$.
\end{itemize}} Thus, either
	\begin{itemize}
		\item $P$ contains $i$ points in the interval and $Q_1$ at least $i+1$, in which case they are not adjacent by a pigeonhole argument, or
		\item $P$ contains $i+1$ points in the interval and $\lfloor in/k\rfloor\in P$, and, since $\lfloor in/k\rfloor\in Q_1$ as well, they are also not adjacent.
	\end{itemize}
 Similarly, by Corollary~\ref{cor_alt}, the polygon $Q_2$ contains at least $i+1$ points between $n$ and $\lceil i n/k \rceil$.\footnote{$Q_2$ is the rotation of a polygon $Q_2'$ that contains either \begin{itemize}
			\item  at least $i+1$ points in $[\lfloor (k-i) n/k\rfloor +1,n]$, so 
			$Q_2$ would contain $\geq i+1$ points in $[\lfloor (k-i) n/k\rfloor +1+\lceil i n/k\rceil,\lfloor i n/k\rfloor]\mod n=[n+1,\lfloor i n/k\rfloor]\mod n \subseteq [n,\lfloor i n/k\rfloor]$, or 
			\item exactly $i$ points in $[\lfloor (k-i) n/k\rfloor +1,n]=[\lfloor (k-i) n/k\rfloor +1,n]$, and $\lfloor (k-i) n/k\rfloor=(k-i) n/k$, and $(k-i) n/k\in Q_2'$, so $Q_2$
			contains $i$ points  in $[\lfloor (k-i) n/k\rfloor+\lfloor i n/k\rfloor +1,\lfloor i n/k\rfloor]\subseteq[n +1,\lfloor i n/k\rfloor]\mod n$ and $n\in Q_2$, so $Q_2$ contains $i+1$ points in $[n,\lfloor i n/k\rfloor]$.
	\end{itemize}} Thus, either
	\begin{itemize}
		\item $P$ contains $i$ points in the interval and $Q_2$ at least $i+1$, thus by a pigeonhole argument they are not adjacent, or
		\item $P$ contains $i+1$ points in the interval and $\lceil in/k\rceil\in P$, and, since $\lceil in/k\rceil\in Q_2$  contains $i+1$ points in this interval as well, in which case they are also not adjacent.
	\end{itemize}
Therefore, we conclude that neither $P$ and $Q_1$, nor $P$ and $Q_2$ are adjacent. This finishes the proof.
\end{proof}

\begin{proof}[Proof of Theorem~\ref{thm:circ_chrom}]
Lemma~\ref{cor:circ_low} provides the lower bound. Hence, it remains to show the upper bound. As mentioned earlier, it suffices to find an $n/k$-circular coloring for $\text{IG}_{n,k}=\text{IG}^{(2)}_{n,k}$.

By Corollary~\ref{cor_alt}, any polygon~$P$ is the rotation of a polygon in $\mathcal{L}_{n,k}$. Define a circular coloring $\chi: V \rightarrow \Z/n\Z$ (where $V$ denotes the set of vertices of $\textrm{IG}_{n,k}$) by~$\chi(P)=\overline{ik}$, where $0 \leq i \leq n-1$ is the smallest number such that $P\in \rho_{i} (\mathcal{L}_{n,k})$. 

We will characterize for which $0 \leq s \leq n-1$ we have $\overline{sk} \notin \{\overline{k},\overline{k+1},...,\overline{-k}\}$. Let~$i \in \N$. If~$s=\lceil in/k \rceil$, then~$sk \in \{in,\ldots,in+k-1\}$, hence $\overline{sk} \in \{\overline{0},\ldots,\overline{k-1}\}$. Moreover, if~$s=\lfloor (i+1)n/k\rfloor$, then~$sk \in \{(i+1)n-k+1,\ldots,(i+1)n\}$, hence $\overline{sk} \in \{\overline{-(k-1)},\overline{-(k-2)},\ldots,\overline{-1},\overline{0}\}$. So if~$ \lfloor in/k \rfloor<s< \lfloor (i+1)n/k \rfloor$, then~$sk \in \{k+in, \ldots, n-k+in\}$, hence $\overline{sk} \in \{\overline{k},\ldots, \overline{-k}\}$. It follows that for $0 \leq s \leq n-1$ we have~$\overline{sk} \notin \{\overline{k},\ldots,\overline{-k}\}$ if and only if there is an~$i \in [k]$ with $s \in\{\lfloor in/k\rfloor,\lceil in/k\rceil\}$.

Let $P$ and $Q$ be polygons, with $\chi(P) = \overline{ik}$ and $\chi(Q) = \overline{jk}$. By the observation above, $\chi(Q)-\chi(P)\not\in \{\overline{k},\ldots,\overline{-k}\}$ if and only if $j=i+\lceil tn/k\rceil$ or $j=i+\lfloor tn/k \rfloor$, for some $t\in[k]$.
By Lemma~\ref{lem:independent_rotations}, if $P\in \rho_{i}(\mathcal{L}_{n,k})$ and, $Q\in \rho_{i+\lceil tn/k\rceil}(\mathcal{L}_{n,k})$ or $Q\in \rho_{i+\lfloor tn/k \rfloor}(\mathcal{L}_{n,k})$, then
$P$ and $Q$ are not adjacent. Therefore, the coloring $\chi$ is a valid circular coloring of size $n/k$. This establishes the upper bound and finishes the proof of the theorem.
\end{proof}

\section{The circular clique number: proof of Theorem~\ref{thm:cliquenumber}}

In this section, we prove Theorem~\ref{thm:cliquenumber}, giving the circular clique number of $\mathrm{IG}_{n,k}^{(r)}$. 

Let $n',k'$ be such that $n'/k' = n/k$ and $\mathrm{gcd}(n',k') = 1$. By Theorem~\ref{thm:circ_chrom}, the circular clique number of $\mathrm{IG}_{n,k}^{(r)}$ is at most $n'/k'$. Hence, to prove Theorem~\ref{thm:cliquenumber}, it remains to find a circular clique of the appropriate size in $\mathrm{IG}_{n,k}^{(r)}$. This is the content of the following proposition.

\begin{proposition} \label{lem:lower_bound}
	Let $n',k'$ be such that $\gcd(n',k')=1$ and $n/k=n'/k'$.
	The subgraph $G_{\mathrm{equi}}$ of $\mathrm{IG}^{(r)}_{n,k}$ induced by the $n'$ different $r$-stable polygons $\{P^j\}_{0 \leq j \leq n-1}$, where
	\begin{equation}\label{set_p}
	P^j:=\{j+n,j+\lceil n/k\rceil,j+\lceil 2n/k\rceil,\ldots,j+\lceil i n/k\rceil,\ldots,j+\lceil (k-1) n/k\rceil\},
	\end{equation}
	is a circular clique of size $n'/k'$.
	
	%
	%
	%
	%
\end{proposition}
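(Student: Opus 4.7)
The plan is to exhibit a graph homomorphism $\phi\colon K_{n'/k'} \to G_{\mathrm{equi}}$, which combined with the upper bound derived from Theorem~\ref{thm:circ_chrom} will pin down the circular clique number. Writing $d=\gcd(n,k)$, $n=n'd$, and $k=k'd$, the hypothesis $\gcd(n',k')=1$ ensures that $k'$ has a multiplicative inverse $k'^{-1}$ in $\Z/n'\Z$. My candidate homomorphism is
\[
\phi(i) := P^{k'^{-1}\,i \bmod n'}.
\]
It is well-defined because the identity $\lceil (i+k')n/k \rceil = \lceil in/k \rceil + n'$ implies $P^{j+n'}=P^{j}$; a short check on points reduced modulo $n'$ shows that $P^0,\ldots,P^{n'-1}$ are pairwise distinct, so these are precisely the $n'$ vertices named in the proposition.

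The heart of the argument is the following interlacing criterion: \emph{$P^0$ and $P^m$ interlace if and only if $k'm \bmod n' \in \{k', k'+1, \ldots, n'-k'\}$}. I would prove it in two steps. First, disjointness already forces interlacing for these near-equally-spaced polygons: every gap in the shape of $P^j$ is $q=\lfloor n/k\rfloor$ or $q+1$, so any two points of $P^m$ have cyclic distance at least $q$, whereas the open interior of any arc of $P^0$ has size at most $q$, forcing any two points in a common arc to have cyclic distance at most $q-1$. Hence each arc of $P^0$ contains at most one point of $P^m$, and when $P^0 \cap P^m = \emptyset$ a cardinality count forces each arc to contain exactly one, giving interlacing.

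Second, I characterize when $P^0\cap P^m\neq\emptyset$. The points $a_i=\lceil in/k\rceil$ of $P^0$ reduce modulo $n'$ to the $k'$-element set $A=\{\lceil in'/k'\rceil\bmod n':i=0,1,\ldots,k'-1\}$, and $P^0\cap P^m\neq\emptyset$ if and only if $m\bmod n'\in A-A$. The key algebraic identity is $k'\cdot A\equiv\{0,1,\ldots,k'-1\}\pmod{n'}$; this follows from $k'\lceil in'/k'\rceil\equiv k'-(in'\bmod k')\pmod{n'}$ together with the bijectivity of $i\mapsto in'\bmod k'$ on $\Z/k'\Z$ (thanks to $\gcd(n',k')=1$). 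Multiplying differences through by $k'$ yields $k'\cdot(A-A)\equiv\{-(k'-1),\ldots,k'-1\}\pmod{n'}$, which combined with the first step establishes the criterion.

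With the criterion in hand, verifying that $\phi$ is a homomorphism is a brief algebraic check: for an edge $(i,j)$ of $K_{n'/k'}$ one has $(j-i)\bmod n'\in\{k',\ldots,n'-k'\}$, and the rotational symmetry of $\mathrm{IG}_{n,k}^{(r)}$ reduces the adjacency of $\phi(i)$ and $\phi(j)$ to whether $P^0$ and $P^{m}$ interlace for $m=k'^{-1}(j-i)\bmod n'$; since $k'm\equiv j-i\pmod{n'}$ falls in the target range, the criterion applies. The main obstacle I anticipate lies in the two-step proof of the criterion itself---the cyclic bookkeeping that forbids two points of $P^m$ from sharing a single arc of $P^0$, and the modular identity for $k'\cdot A$; once both are in place, the homomorphism drops out immediately and Theorem~\ref{thm:cliquenumber} follows.
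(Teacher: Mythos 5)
Your proof is correct, and it takes a genuinely different route from the paper's. The paper proceeds (after reducing to $\gcd(n,k)=1$) by checking directly that the bijection $P^j\mapsto\overline{jk}$ is a graph isomorphism $G_{\mathrm{equi}}\to K_{n/k}$: non-edges are handled via Lemma~\ref{lem:independent_rotations}, and edges (the condition in display~\eqref{interlacingsituation}) are verified by a fairly involved case analysis on ceilings, floors, and fractional parts, split into the cases $s=1$ and $s$ maximal. You instead avoid almost all of the floor/ceiling casework by isolating a clean structural lemma: because every gap of a polygon $P^j$ is $\lfloor n/k\rfloor$ or $\lfloor n/k\rfloor+1$, two distinct points of $P^m$ cannot lie strictly inside the same arc of $P^0$, so disjointness already forces interlacing; hence \emph{interlacing is equivalent to disjointness} for these near-equispaced polygons. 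That observation is not in the paper and is, to my taste, the most illuminating step. The remaining work---characterizing $P^0\cap P^m\neq\emptyset$---becomes elementary modular arithmetic via the identity $k'\cdot A\equiv\{0,\dots,k'-1\}\pmod{n'}$, which you establish from $\gcd(n',k')=1$; this replaces the paper's computations with~\eqref{eq.up1}--\eqref{eq.up2}. One cosmetic point: your formula $k'\lceil in'/k'\rceil\equiv k'-(in'\bmod k')\pmod{n'}$ should be read as $(-in')\bmod k'$ so that the $i=0$ term correctly yields $0$ rather than $k'$; the conclusion $k'\cdot A\equiv\{0,\dots,k'-1\}$ is unaffected. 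Also note that you construct the homomorphism in the direction $K_{n'/k'}\to G_{\mathrm{equi}}$, which is exactly what the definition of circular clique number requires; the paper states a homomorphism $G_{\mathrm{equi}}\to K_{n/k}$ but in fact proves it is an isomorphism (both inclusions of edge sets), so the two proofs ultimately establish the same isomorphism.
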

\begin{proof}
	
Observe that $P^j=P^i$ if and only if $\overline{i} = \overline{j}$ in $\Z/(n/\gcd(n,k))\Z$. Indeed, $in/k\in \Z$, for $i\in [k]$, if and only if $i=
\lambda k/\gcd(n,k)$ for some $\lambda$ such that $\lambda k/\gcd(n,k)\in[k]$. Furthermore, the integers $\{\lceil in/k\rceil\}_{i\in [k/\gcd(n,k)]}$ are uniquely determined by the pair $(n',k')$ with $\gcd(n',k')=1$ and $n'/k'=n/k$ (or the rational number $n'/k'$).
Hence,
the set of polygons \eqref{set_p} has size $n'$, which shows the first assertion of the lemma.

Without loss of generality, we can thus assume that $\gcd(n,k)=1$. In the other cases, the rotations of the polygons in the following arguments have a multiplicative factor of $\gcd(n,k)$, so the rotated polygons considered are the ones corresponding to elements in the subgroup $\gcd(n,k)\Z/n\Z$ of $\Z/n\Z$ of size $n'$.

Recall that $K_{n/k}$ is the graph with the elements of $\Z/n\Z$ as vertices and two vertices being adjacent if their difference is in $\{\overline{k},...,\overline{-k}\}$ (larger or equal than $k$).


We define a graph homomorphism from $G_{\textrm{equi}} \rightarrow K_{n/k}$ by  
\begin{equation}\label{eq.v_assign}
P^j \mapsto \overline{jk}.
\end{equation}
Since $\gcd(n,k)=1$, the map \eqref{eq.v_assign} is a bijection. We now show it is a graph homomorphism.

Without loss of generality, it suffices to argue about the adjacencies involving $P:=P^0$. 
By Lemma~\ref{lem:independent_rotations}, the polygon $P$ does not interlace with the polygons
$P^t$, with $t\in \cup_{i\in[k]} \{\lfloor in/k\rfloor, \lceil in/k\rceil\}$, and those correspond precisely (using the vertex assignment \eqref{eq.v_assign}) to the vertices in $K_{n/k}$ that are at distance strictly less than $k$ from $\overline{0}$.

It remains to show that $Q :=P^{\lceil in/k\rceil +s}$ and $P$ interlace, for each $0 \leq i \leq k-1$ and each 
\begin{align}\label{interval}
s\in [1,\lfloor (i+1)n/k\rfloor - \lceil i n/k\rceil-1].
\end{align}
If the interval in $(\ref{interval})$ is empty, we are done. So suppose that the interval is nonempty. Note that the polygon~$Q$ consists of the points $\lceil in/k\rceil +s,\lceil in/k\rceil +s+\lceil n/k\rceil,
\lceil in/k\rceil +s+\lceil 2n/k\rceil,\ldots,\lceil in/k\rceil +s+\lceil (k-1)n/k\rceil$.

Observe that $Q$ and $P$ interlace
if for any $0 \leq j \leq k-1$, the $j$-th point in $Q$ lies between $\lceil(i+j)n/k\rceil$ and
$\lceil(i+j+1)n/k\rceil$ (which are consecutive points in $P$). That is,~$Q$ and~$P$ interlace if for each $0 \leq j \leq k-1$ we have
\begin{align} \label{interlacingsituation}
\lceil(i+j)n/k\rceil   <  \ceil{in/k}+s+\ceil{jn/k} < \lceil(i+j+1)n/k\rceil.
\end{align}
Let $\{x\}:=x-\lfloor x\rfloor$ denote the fractional part of $x$. Then in general,
\begin{align}\label{eq.up1}
	&\lceil in/k\rceil + \lceil jn/k\rceil = 
\begin{cases}
\lceil (i+j)n/k\rceil & \text{ if } \left\{in/k\right\}+\left\{jn/k\right\}>1 \text{ or } \left\{in/k\right\}\cdot\left\{jn/k\right\}=0,\\
\lceil (i+j)n/k\rceil +1 & \text{ if } \left\{in/k\right\}+\left\{jn/k\right\} \leq 1\text{ and }\left\{in/k\right\}\cdot\left\{jn/k\right\}\neq 0, \\
\end{cases} \\ \label{eq.up2}
&\lfloor in/k\rfloor + \lceil jn/k\rceil = 
\begin{cases}
\lceil (i+j)n/k\rceil & \text{ if } \left\{in/k\right\}+\left\{jn/k\right\}\leq 1 \text{ and } \left\{jn/k\right\}\neq 0,\\
\lfloor (i+j)n/k\rfloor & \text{ if } \left\{in/k\right\}+\left\{jn/k\right\} \geq 1 \text{ or } \left\{jn/k\right\}=0.\\
\end{cases}
\end{align}

Suppose first that~$s=1$.  Using \eqref{eq.up1}, we obtain that~$(\ref{interlacingsituation})$ is satisfied unless 
$\lceil (i+j)n/k\rceil +2=\lceil (i+j+1)n/k\rceil$. This implies that $\lfloor n/k\rfloor=2$ (since $n\geq 2k$ by assumption), that $\left\{in/k\right\}+\left\{jn/k\right\} \leq 1$, and that $in/k,jn/k\notin \Z$.
%
%



Assume that $\{in/k\}+\{n/k\}\geq 1$. Since 
\[
\lceil (i+j)n/k\rceil + \lfloor n/k\rfloor = \lceil (i+j)n/k\rceil + 2=\lceil (i+j+1)n/k\rceil,
\]
by \eqref{eq.up2} we have 
\[
\{(i+j)n/k\}+\{n/k\}\leq 1 \text{ and } \{(i+j)n/k\}\neq 0.
\]
This implies that $\{in/k\}+\{jn/k\} <1$ (so $\{(i+j)n/k\}=\{in/k\}+\{jn/k\}$). Hence, $\{in/k\}+\{jn/k\} + \{n/k\}\leq 1$, which contradicts the fact that $\{in/k\}+\{n/k\}\geq 1$, as $jn/k\notin \Z$.

Therefore, we have $\{in/k\}+\{n/k\}< 1$. Hence,  $(i+1)n/k\notin \Z$ and, by \eqref{eq.up2},
 \begin{align*}
\lceil in/k\rceil + 1 &= \lceil in/k\rceil + 2-1=\lceil in/k\rceil + \lfloor n/k\rfloor -1=\lceil (i+1)n/k\rceil -1
\\&=\lfloor (i+1)n/k\rfloor+1 -1=\lfloor (i+1)n/k\rfloor.
\end{align*}
So the interval in~$(\ref{interval})$ is empty, a contradiction. Hence~$(\ref{interlacingsituation})$ holds for~$s=1$.

%
%
%
%

It now suffices to show that, for any fixed~$0 \leq j \leq k-1$, equation~$(\ref{interlacingsituation})$ also holds with the largest~$s$ from~$(\ref{interval})$. As we already know that~$\eqref{interlacingsituation}$ holds for~$s=1$, we then have~$(\ref{interlacingsituation})$ for all~$s$ in the interval~$(\ref{interval})$. For~$s=\lfloor (i+1)n/k\rfloor - \lceil i n/k\rceil-1$ one obtains
\begin{align*}
\ceil{in/k}+s+\ceil{jn/k} & = \ceil{in/k}+\lfloor (i+1)n/k\rfloor - \lceil i n/k\rceil-1+\ceil{jn/k}
\\&= \lfloor (i+1)n/k\rfloor +\ceil{jn/k}-1 .
\end{align*}
However, by \eqref{eq.up2}, the last expression is equal to $\lceil(i+j+1)n/k\rceil -1$ or $\lfloor(i+j+1)n/k\rfloor -1$, which is in both cases strictly smaller than $\lceil(i+j+1)n/k\rceil$.  Moreover, $\ceil{in/k}+s+\ceil{jn/k} > \lceil (i+j)n/k\rceil$. Indeed, $s\geq 1$ as
\[\lfloor (i+1) n/k\rfloor-\lceil in/k\rceil -1\geq \lfloor n/k\rfloor
+\lceil in/k\rceil -\lceil in/k\rceil -1= \lfloor n/k\rfloor -1\geq 1,\]
and
$\lceil in/k\rceil+\lceil j n/k\rceil\geq \lceil (i+j)n/k\rceil$. This proves~$(\ref{interlacingsituation})$ for~$s=\lfloor (i+1)n/k\rfloor - \lceil i n/k\rceil-1$ and concludes the proof.
\end{proof}

\section*{Acknowledgments}

We would like to thank Lex Schrijver and Arnau Padrol for their useful comments.

\end{document}